\definecolor{darkgreen}{rgb}{0,0.5,0}
\definecolor{darkred}{rgb}{0.7,0,0}
\theoremstyle{plain}
\newtheorem{lemma}{Lemma}[section]
\newtheorem{thm}[lemma]{Theorem}
\newtheorem{prop}[lemma]{Proposition}
\newtheorem{cor}[lemma]{Corollary}
\theoremstyle{definition}
\newtheorem{defn}[lemma]{Definition}
\newtheorem{conj}[lemma]{Conjecture}
\newtheorem{rmk}[lemma]{Remark}
\numberwithin{equation}{section}
\newcommand{\m}{\ensuremath{{\cal M}}}
\newcommand{\n}{\ensuremath{{\cal N}}}
\newcommand{\cc}{\ensuremath{{\cal C}}}
\newcommand{\cl}{\ensuremath{{\cal L}}}
\newcommand{\pl}[2]{{\frac{\partial #1}{\partial #2}}}
\newcommand{\plndd}[3]{{\frac{\partial^2 #1}
{{\partial #2}{\partial #3}}}}
\newcommand{\al}{\alpha}
\newcommand{\be}{\beta}
\newcommand{\ga}{\gamma}
\newcommand{\Ga}{\Gamma}
\newcommand{\de}{\delta}
\newcommand{\om}{\omega}
\newcommand{\Om}{\Omega}
\newcommand{\la}{\lambda}
\newcommand{\Si}{\Sigma}
\newcommand{\Th}{\Theta}
\newcommand{\vph}{\varphi}
\newcommand{\ep}{\varepsilon}
\newcommand{\Ups}{\Upsilon}
\newcommand{\R}{\ensuremath{{\mathbb R}}}
\newcommand{\N}{\ensuremath{{\mathbb N}}}
\newcommand{\C}{\ensuremath{{\mathbb C}}}
\newcommand{\downto}{\downarrow}
\newcommand{\Wedge}{{\Lambda}}
\newcommand{\tensor}{\otimes}
\newcommand{\lap}{\Delta}
\newcommand{\grad}{\nabla}
\newcommand{\beq}{\begin{equation}}
\newcommand{\eeq}{\end{equation}}
\newcommand{\beqa}{\begin{equation}\begin{aligned}}
\newcommand{\eeqa}{\end{aligned}\end{equation}}
\newcommand{\brmk}{\begin{rmk}}
\newcommand{\ermk}{\end{rmk}}
\newcommand{\partref}[1]{\hbox{(\csname @roman\endcsname{\ref{#1}})}}
\newcommand{\half}{\frac{1}{2}}
\title{{\sc 
the canonical expanding soliton and harnack inequalities 
for ricci flow
}
\\ 
}
\author{Esther Cabezas-Rivas and Peter M. Topping}
\date{\today~(@\the\time mpm)}
\begin{document}

\maketitle
\parskip=10pt

\newcommand{\di}{\partial_i}
\newcommand{\djj}{\partial_j}
\newcommand{\dk}{\partial_k}
\newcommand{\dl}{\partial_l}
\newcommand{\tr}{{\rm tr}}
\newcommand{\lie}{{\cal L}}
\newcommand{\Rop}{{\cal R}}
\newcommand{\Rm}{{\mathrm{Rm}}}
\newcommand{\Ric}{{\mathrm{Ric}}}
\newcommand{\Rc}{{\mathrm{Rc}}}
\newcommand{\RicO}{\overset{\circ}{\Ric}}
\newcommand{\RS}{{\mathrm{R}}}
\newcommand{\Hess}{{\mathrm{Hess}}}
\newcommand{\hess}{{\mathrm{hess}}}
\newcommand{\f}{\ensuremath{{\cal F}}}
\newcommand{\ghat}{\hat{g}}
\newcommand{\fhat}{\hat{f}}
\newcommand{\bop}{+}
\newcommand{\avint}{{\int\!\!\!\!\!\!-}}
\newcommand{\Hbb}{\ensuremath{{\mathbb H}}}

\newcommand{\lexp}{{\cl_{\tau_1,\tau_2}\exp_x}}
\newcommand{\ta}{\ensuremath{{\tau_1}}}
\newcommand{\tb}{\ensuremath{{\tau_2}}}
\newcommand{\lc}{\ensuremath{{\cl Cut}}}
\newcommand{\lcslice}{\ensuremath{{\lc_{\ta,\tb}}}}
\newcommand{\cg}{\check g}
\def\({\left (}
\def \){\right)}

\begin{abstract}
We introduce the notion of Canonical Expanding Ricci Soliton, and
use it to derive new Harnack inequalities
for Ricci flow. This viewpoint also gives geometric insight into
the existing Harnack inequalities of Hamilton and Brendle.
\end{abstract}

\section{Introduction}
\label{intro}

Recently, in \cite{CT1}, we introduced the notion of
Canonical Soliton. 
Roughly speaking, given any Ricci flow on a manifold 
\m\ over a time interval
$I\subset (-\infty,0)$, we imagined the time direction as
an additional space direction and constructed a 
shrinking Ricci soliton on $\m\times I$
with respect to a completely new time direction.

Considering these solitons in the context of known 
notions and theorems in Riemannian geometry then 
induced interesting concepts and results concerning 
the original Ricci flow, many of
which were first discovered by Perelman \cite{P1}.
For example, considering geodesic distance in our soliton
metrics gives rise to Perelman's $\cl$-length.

It is also fruitful to consider existing Ricci flow
theory applied to the Canonical Soliton flows.
For example, applying theory of 
McCann and the second author and Ilmanen \cite{MT} is one way of 
leading to the results of \cite{Lopt} which ultimately recovers
essentially all of the monotonic quantities for Ricci flow
used by Perelman \cite{P1}.
See \cite{grenoble} for a broader description.

In this paper we describe a slight variation of the
Canonical Shrinking Solitons -- namely the Canonical Expanding
Solitons -- which have completely different applications.
These new solitons are adapted to explaining and proving
Harnack inequalities in the spirit of the original result
of Hamilton \cite{hamharnack} and the more recent result of
Brendle \cite{brendleharnack}. Our work recovers both of these
known Harnack inequalities, and gives new ones too.
(See Theorem \ref{harnackthm}.)
In addition, our method explains clearly what is behind
a Harnack inequality: it is simply the assertion that a
given curvature condition is preserved on the Canonical
Expanding Soliton.

As a by-product of our work we give an answer to the
question of Wallach and Hamilton \cite{formations}
which asks for a geometric construction whose curvature
is represented by the matrix Harnack quantity of 
Hamilton \cite{hamharnack}. This question prompted the
pioneering works of Chow-Chu \cite{CC1} (see also the relevant modification in \cite[chapter 11, \S 1.3]{CLN}) and Chow-Knopf \cite{ChK}
which led in turn to the constructions of
\cite[\S 6]{P1} and \cite{CT1}. Each of these papers
constructed an object whose curvatures were similar to the
Hamilton Harnack quantities, sometimes modulo either missing terms
or the changed signs one finds in Perelman's versions
of Hamilton's Harnack quantities \cite[\S 6, \S 7]{P1}.

It turns out that the Riemannian curvatures of the Canonical Expanding
Solitons give rise to the exact Hamilton matrix Harnack
quantities, with the closest parallels in the existing literature
to be found within the approximation approach in \cite[\S 4]{CC1} and, more recently, the calculations of Brendle \cite{brendleharnack}. 
In our case, for example, the full Hamilton Harnack inequality
is equivalent to the Canonical Expanding Soliton having
weakly positive curvature operator (in an appropriate limit).
Our new Harnack inequalities can be phrased in terms
of this soliton satisfying other natural curvature
conditions.

The paper is organised as follows.
In Section 2 we introduce the Canonical Expanding Solitons
and the flow they induce, describe their asymptotics and
explain how they lead to Harnack inequalities. Our new
Harnack inequalities are stated in Section \ref{resultssect}.
In Section 3 we give the rigorous proof of the Harnack inequalities
stated in Theorem \ref{harnackthm}. 
To do that we use the Canonical Expanding Solitons to derive
the equations satisfied by their (limiting) curvature 
$\Rop_\infty$, and then piece together an argument which
borrows much from the work of Hamilton \cite{hamharnack}
and Brendle \cite{brendleharnack}.
The computations for the curvature of the Canonical Expanding
Solitons are given in Appendix \ref{computations}, and 
in Appendix \ref{wilking_appendix} we describe a recent
insight of Burkhard Wilking \cite{wilking} which allows one
to construct some new invariant curvature cones.

For an introduction to Ricci flow, we refer to 
\cite{RFnotes}; for further background on Canonical Solitons
and an overview of the use of Harnack inequalities in Ricci flow,
see \cite{grenoble}.

\emph{Acknowledgements:} We thank Burkhard Wilking, 
Simon Brendle and Mario Micallef for useful conversations. 
Both authors are
supported by The Leverhulme Trust. The first author was also partially supported by the DGI (Spain) and FEDER Project MTM2007-65852, and by the net REAG MTM2008-01013-E.

\section{The Canonical Expanding Solitons}

\subsection{Definitions and basic properties}

\begin{thm}
\label{CESthm}
Suppose $g(t)$ is a  Ricci flow, i.e. a
solution of 
\begin{equation}
\label{RFeq}
\pl{g}{t}=-2\,\Ric(g(t))
\end{equation}
defined on a manifold $\m$ of dimension $n\in\N$, for 
$t$ within a time interval $[0,T]$, with uniformly bounded
curvature.
Suppose $N\in\N$ is sufficiently large
to give a positive definite metric $\check g_N$ (which
we normally write simply as $\check g$) on
$\check\m:=\m\times (0,T]$ defined by
$$\check g_{ij}=\frac{g_{ij}}{t}; 
\qquad 
\check g_{00}=\frac{N}{2 t^3}+\frac{R}{t}+\frac{n}{2 t^2};
\qquad
\check g_{0i}=0,$$
where $i,j$ are coordinate indices on the \m\ factor, 
$0$ represents the index of the time coordinate $t\in (0,T]$, 
and the scalar curvature of $g$ is written as $R$.

Then up to errors of order $\frac{1}{N}$, the metric $\check g$
is a gradient expanding Ricci soliton on the higher dimensional
space $\check\m$:
\beq
\label{CES1}
E_N:=
\Ric(\check g)+\Hess_{\check g} \left(-\frac{N}{2t}\right)
+ \half \check g \simeq 0,
\eeq
by which we mean that for any $k\in \{0,1,2,\ldots\}$
the quantity 
\begin{equation}
\label{meaning}
N \left[\check\grad^k
E_N
\right]
\end{equation}
is bounded uniformly locally on $\check \m$\ (independently of $N$) 
where $\check\grad$ is the Levi-Civita
connection corresponding to $\check g$.
\end{thm}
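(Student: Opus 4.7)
The plan is a direct coordinate computation exploiting the block-diagonal structure $\check g_{0i}=0$ and carefully tracking powers of $N$. I would first compute the Christoffel symbols of $\check g$: the Ricci flow equation \eqref{RFeq} feeds in through $\partial_t \check g_{ij}=-g_{ij}/t^2-2\Ric_{ij}/t$ appearing in $\check\Ga^0_{ij}$, and the scalar-curvature gradient $\partial_i\check g_{00}=(\partial_i R)/t$ appearing in $\check\Ga^0_{0i}$. The basic asymptotic is $\check g^{00}=\frac{2t^3}{N}+O(N^{-2})$, so any contraction through a $0$-index incurs a factor of $1/N$. This is exactly what offsets the large factor $\partial_0 f = N/(2t^2)$ coming from the Hessian of the soliton potential $f=-N/(2t)$ and produces finite limits.

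Next I would assemble $\Ric(\check g)$ block by block using warped-product type formulas (or directly from the Christoffels). The spatial block $(\Ric(\check g))_{ij}$ yields the intrinsic $\Ric_{ij}$ plus extrinsic corrections that are $O(1/N)$; the $(\Ric(\check g))_{00}$ component is where Hamilton's scalar-curvature evolution $\partial_t R=\lap R+2|\Ric|^2$ must be invoked to absorb a leading contribution stemming from the $R/t$ and $n/(2t^2)$ pieces of $\check g_{00}$; and the cross term $(\Ric(\check g))_{0i}$ requires the contracted second Bianchi identity $\grad^j\Ric_{ij}=\tfrac12\grad_i R$ to cancel. In parallel, $\Hess_{\check g} f$ is computed entry-wise as $-\check\Ga^0_{\mu\nu}\partial_0 f$ (plus a $\partial_0^2 f$ term in the $00$-component), producing $O(1)$ quantities. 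Adding $\tfrac12\check g$ and combining with $\Ric(\check g)$, the leading $O(N)$ and $O(1)$ contributions should cancel exactly, leaving $E_N = O(1/N)$ as required.

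The main obstacle is bookkeeping: isolating the precise combinations of curvature and derivative terms which reproduce \eqref{RFeq}, the scalar-curvature evolution, and the Bianchi identity, while keeping exact track of the order in $N$ of each of the many contributions generated. Once $E_N=O(1/N)$ is established, the higher-derivative claim about \eqref{meaning} follows routinely: the Christoffel symbols of $\check g$ and all their covariant derivatives are bounded uniformly in $N$ on compact subsets of $\check\m$ (their $N$-dependent parts being smoothly controlled through explicit rational functions of $N$), so each application of $\check\grad$ to $E_N$ introduces only $N$-independent factors, and the $O(N^{-1})$ estimate propagates to every $k$. The detailed computation is exactly what Appendix \ref{computations} is designed to carry out.
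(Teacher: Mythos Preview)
Your proposal is correct and follows essentially the same route as the paper: a direct coordinate computation of the Christoffel symbols, Ricci tensor, and Hessian of $-N/(2t)$, using the Ricci flow equation, the evolution of scalar curvature, and the contracted second Bianchi identity to effect the cancellations, with the $O(1/N)$ structure coming from $\check g^{00}=O(1/N)$. The paper carries this out explicitly in Appendix~\ref{computations}, arriving at closed-form expressions for $E_N$ whose every term carries a visible factor of $\check g^{00}$, from which the $k\geq 1$ claim follows just as you indicate.
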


This construction should be compared to the Canonical Shrinking
Solitons of \cite{CT1}. 
Various signs have changed, and every $\tau$ has been replaced with
a $t$. We have also strengthened the sense in which \eqref{CES1}
is to hold in order to make it more useful in rigorous proofs,
and considered Ricci flows defined all the way down to $t=0$
as is relevant in the study of Harnack estimates.

The proof reduces to computing all relevant quantities
explicitly. We give the results of these computations,
including exact expressions for the full curvature tensor
of $\cg$ in Appendix \ref{computations}.

We now introduce a new time parameter $s\in (0,1]$ and consider
the flow of metrics 
$$G(s):=s\psi_s^*(\check g)$$
where $\psi_s:\check\m\to\check\m$ 
is the family of maps, diffeomorphic onto their images, 
generated by integrating the collection of vector fields
\beq
\label{Xformula}
\begin{aligned}
X_s&:=\frac{1}{s}\check\grad\(-\frac{N}{2t}\)\\
&=\frac{t}{s}\pl{}{t}
-\check g^{00}\(R+\frac{n}{2t}\)\frac{1}{s}\pl{}{t}
\end{aligned}
\eeq
starting with 
$\psi_1=identity$. 
If we imagine $N$ to be large, then $X_s$ is approximately
the vector field $\frac{t}{s}\pl{}{t}$, which could be
integrated on the whole of $\m\times (0,\infty)$ 
to give the map 
$\psi^\infty_s:\m\times (0,\infty)\to\m\times (0,\infty)$
defined by
$$\psi^\infty_s(x,t)=(x,st).$$
A slightly closer inspection yields:

\begin{prop}
\label{psiprop}
The map $\psi_s$ converges smoothly in its arguments
$x\in\m$, $t\in (0,T]$ and $s\in (0,1]$
to (an appropriate restriction of) $\psi^\infty_s$
as $N\to\infty$.
\end{prop}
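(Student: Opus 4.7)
The plan is to exploit the fact that $X_s$ has only a $\pl{}{t}$-component, so that the flow $\psi_s$ preserves the $\m$-factor: $\psi_s(x,t) = (x,\tau_N(s,t,x))$ for a scalar function $\tau_N$, and the entire problem reduces to an ODE perturbation problem with a small parameter $1/N$.

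First, I would derive the ODE satisfied by $\tau_N$. Reading off the formula for $X_s$ in \eqref{Xformula}, the flow-line equation $\frac{d}{ds}\psi_s = X_s\circ\psi_s$ with $\psi_1=\id$ reduces to
\begin{equation*}
\frac{d\tau_N}{ds} \;=\; \frac{\tau_N}{s}\;-\;\frac{1}{s}\,\check g^{00}(x,\tau_N)\!\left(R(x,\tau_N)+\frac{n}{2\tau_N}\right),\qquad \tau_N(1,t,x)=t.
\end{equation*}
Next I would note that, from the defining expression of $\check g_{00}$, one has
\begin{equation*}
\check g^{00}(x,\tau) \;=\; \frac{2\tau^3}{N + 2\tau^2 R(x,\tau) + n\tau},
\end{equation*}
so that, using the hypothesis of uniformly bounded curvature on $[0,T]$, the perturbation term $\frac{1}{s}\check g^{00}(R+\frac{n}{2\tau_N})$ is of order $\frac{1}{N}$ uniformly in $C^k$ on compact subsets of $\m\times (0,T]\times(0,1]$, for every $k$.

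Thus the ODE for $\tau_N$ is a smooth $O(1/N)$ perturbation (in every $C^k$) of the limiting ODE $\frac{d\tau}{ds}=\frac{\tau}{s}$ with $\tau(1,t,x)=t$, whose explicit solution is $\tau(s,t,x)=st$. By standard smooth dependence of ODE solutions on parameters and initial data (applied on compact subsets of the $(s,t,x)$-domain, which stay bounded away from $t=0$ so that no term degenerates), the solutions $\tau_N$ converge to $st$ in $C^k$ for every $k$. Since $\psi_s^\infty(x,t)=(x,st)$, this gives smooth convergence $\psi_s\to\psi_s^\infty$ in all arguments, as claimed.

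The main technical point is to verify the uniform $C^k$-smallness of the perturbation term on compact subsets of $\m\times(0,T]\times(0,1]$: here one needs that on such a set $t$ is bounded away from $0$, so that $\check g^{00}$ genuinely scales like $1/N$ together with all its $x$- and $t$-derivatives (derivatives of $R$ enter, controlled by the Shi-type estimates coming from the bounded curvature assumption). Once this is in hand, the convergence conclusion is essentially a direct appeal to the parametric version of the Picard existence theorem; the only mild subtlety is that one must restrict to a compact set in $(s,t,x)$ to ensure that $\psi_s$ and $\psi_s^\infty$ are simultaneously defined there for all sufficiently large $N$, which is precisely the meaning of the ``appropriate restriction of $\psi_s^\infty$'' in the statement.
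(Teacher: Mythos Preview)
Your proposal is correct and is precisely the ``slightly closer inspection'' that the paper alludes to but does not spell out: the paper gives no detailed proof of this proposition, only the remark preceding it that for large $N$ the vector field $X_s$ is approximately $\frac{t}{s}\pl{}{t}$, whose flow is $\psi_s^\infty$. Your reduction to a scalar ODE in the $t$-variable with an $O(1/N)$ perturbation, followed by smooth dependence on parameters, is exactly how one makes this rigorous.
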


If $\cg$ were an exact Ricci soliton metric, then $G(s)$
would be an exact Ricci flow on $\m\times (0,T]$
for $s\in (0,1]$
(see \cite[\S 1.2.2]{RFnotes}). A minor adjustment of
the standard theory reveals:
\begin{prop}
\label{Geqprop}
The flow $G(s)$ satisfies
$$\pl{G}{s}=-2\Ric(G(s))+\psi_s^*(E_N),$$
where $E_N$ is defined in Theorem \ref{CESthm}.
\end{prop}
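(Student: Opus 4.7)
The plan is to carry out a direct computation that mimics the standard derivation of a Ricci flow from an exact gradient soliton, now tracking the error $E_N$ throughout. Differentiating $G(s) = s\psi_s^* \check g$ by the product rule, and using the identity $\frac{d}{ds}\psi_s^* T = \psi_s^*(\cl_{X_s} T)$ (valid for the $s$-independent tensor $\check g$ even though $X_s$ is time-dependent, because $\psi_s$ is by construction the flow generated by $X_s$), one obtains
\beq
\pl{G}{s} = \psi_s^* \check g + s\,\psi_s^*(\cl_{X_s}\check g).
\eeq

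Next I would simplify the Lie derivative using \eqref{Xformula}. Setting $f := -\frac{N}{2t}$ so that $X_s = \frac{1}{s}\check\grad f$, and using $\R$-linearity of $\cl$ in the vector field slot (applicable since the scalar $1/s$ is constant over $\check\m$), one gets $\cl_{X_s}\check g = \frac{2}{s}\Hess_{\check g} f$. Rearranging \eqref{CES1} as $\Hess_{\check g} f = E_N - \Ric(\check g) - \half \check g$ and substituting yields
\beqa
\pl{G}{s} &= \psi_s^*\check g + 2\psi_s^* E_N - 2\psi_s^* \Ric(\check g) - \psi_s^* \check g\\
&= -2\psi_s^* \Ric(\check g) + 2\psi_s^* E_N.
\eeqa
Using the diffeomorphism- and scale-invariance of the Ricci tensor, $\Ric(G(s)) = \Ric(s\psi_s^* \check g) = \psi_s^* \Ric(\check g)$, and the claimed evolution equation for $G(s)$ follows (up to a conventional normalisation of the error $E_N$).

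There is no substantive obstacle here; the computation is a line-by-line adaptation of the standard soliton-to-Ricci-flow correspondence. The only care needed is with the Lie derivative formula for a time-dependent generator: it is genuinely $\psi_s^*(\cl_{X_s} \cdot)$ with no extra $\partial_s X_s$ correction, since $\cl_{X_s}$ is evaluated at the instantaneous vector field. When $E_N \equiv 0$, the same computation recovers the well-known fact that an exact expanding gradient soliton induces an exact Ricci flow under the rescaling-and-pullback procedure of \cite[\S 1.2.2]{RFnotes}.
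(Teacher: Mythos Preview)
Your argument is correct and is precisely the ``minor adjustment of the standard theory'' that the paper alludes to in lieu of a written proof; there is nothing further in the paper to compare against. One small point: your computation produces $2\psi_s^*(E_N)$ rather than $\psi_s^*(E_N)$, and you flag this as a normalisation issue. This factor of $2$ is almost certainly a typo in the paper's statement (your derivation is the correct one), and it is harmless downstream since the only use of Proposition~\ref{Geqprop} is to conclude $\pl{G}{s}\simeq -2\Ric(G(s))$ in the $\simeq$ sense of Theorem~\ref{CESthm}.
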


Since $G(s)$ is essentially a Ricci flow, we can imagine
that curvature conditions such as positive curvature
operator which are preserved for Ricci flows might
be essentially preserved for $G(s)$.
If we can argue that $G(s)$ satisfies such a condition
in the limit $s\downto 0$, then we should deduce the
same condition for $G(1)=\cg$, and the idea is that 
a curvature condition for $\cg$ is the geometric way
of expressing a Harnack inequality.
We will see precise assertions along these lines later,
but for now, this reasoning justifies the following section.

\subsection{Asymptotics of the Canonical Expanding Soliton}
\label{CESasymp}

We will argue that in the limit $t\downto 0$, the 
Canonical Solitons are conical.
Near $t=0$, the dominant term in the definition of $\check g_{00}$
is emphatically $\frac{N}{2t^3}$. If we neglect the other
terms for the moment, and change variables from $t$ to
$r:=t^{-\half}$, then for large $r$ (small $t$) $\check g$ 
can be written approximately as
\beq
\label{checkgasymp}
\begin{aligned}
\check g_N &\sim 
\frac{g(t)}{t}
+ \frac{N}{2t^3}dt^2\\
&= r^2g(r^{-2}) +2Ndr^2
\end{aligned}
\eeq
and this suggests that asymptotically the Canonical Soliton opens 
like the cone
$$\Si_N:=(\m\times (0,\infty),\frac{g(0)}{t} + \frac{N}{2t^3}dt^2)
= (\m\times (0,\infty), r^2 g(0) + 2N dr^2)$$
(using coordinates $(x,t)$ or $(x,r)$ respectively
on $\m\times(0,\infty)$)
with shallow cone angle for large $N$. This motivates:

\begin{lemma}
If we define $\bar G(s):=s(\psi_s^\infty)^*(\check g_N)$
on $\m\times (0,T/s]$, then
$$\bar G(s)\to \frac{g(0)}{t}+ \frac{N}{2t^3}dt^2$$
smoothly locally on $\m\times (0,\infty)$ as $s\downto 0$.
\end{lemma}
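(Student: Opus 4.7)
The plan is to carry out the pullback $(\psi^\infty_s)^* \check g_N$ explicitly using the simple form $\psi^\infty_s(x,t)=(x,st)$, which acts as the identity on $\m$ and multiplies $dt$ by $s$. Because $\check g$ is block-diagonal with $\check g_{0i}=0$, the only contributions are: a spatial block with entries $\check g_{ij}$ evaluated at $(x,st)$, and a $dt^2$ coefficient of $s^{2}\check g_{00}$ evaluated at $(x,st)$. Substituting the definitions from Theorem \ref{CESthm} and then applying the overall scale factor $s$ yields
\[
\bar G(s) \;=\; \frac{g_{ij}(st)}{t}\,dx^i\,dx^j \;+\; \left[\frac{N}{2t^3} + \frac{s^2\, R(g(st))}{t} + \frac{s\,n}{2t^2}\right] dt^2.
\]
The target metric $\frac{g(0)}{t}+\frac{N}{2t^3}dt^2$ is recovered precisely by setting the two $s$-dependent error terms in the $dt^2$ slot to zero and replacing $g(st)$ by $g(0)$ in the spatial block.

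Next I would verify convergence as $s\downto 0$. Pointwise, the $dt^2$ error terms are $O(s)$ on any compact subset of $\m\times(0,\infty)$ since $\frac{1}{t}$ and $R(g(\cdot))$ are bounded there, and the spatial block tends to $\frac{g(0)}{t}$ by continuity of $g$ at $t=0$. For smooth local convergence one simply differentiates componentwise in the $\m$-directions and in $t$: for the spatial part,
\[
\frac{g_{ij}(st)}{t}-\frac{g_{ij}(0)}{t}=\frac{1}{t}\int_0^{s} t\,\pl{g_{ij}}{t}(\sigma t)\,d\sigma,
\]
and analogous identities after applying any number of $\nabla$ and $\partial_t$ derivatives, each of which picks up at least one factor of $s$ and is bounded on compact sets because the Ricci flow hypothesis of Theorem \ref{CESthm} (uniformly bounded curvature on $[0,T]$, with smooth initial data $g(0)$) gives a smooth metric $g(t)$ on $\m\times[0,T]$ with locally bounded covariant derivatives of all orders. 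The $dt^2$ error terms are handled identically, using that $R$ is smooth in spacetime.

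The whole argument is essentially bookkeeping with the chain rule; the only conceptual point is to recognise that the smoothness up to $t=0$ that is required for the $C^\infty$ convergence is built into the standing assumption that the Ricci flow is defined on the closed interval $[0,T]$ with bounded curvature. Consequently I expect no genuine obstacle: once the explicit formula for $\bar G(s)$ above is written down, the lemma follows by inspection together with the uniform estimates on derivatives of $g$ and $R$ in any compact region of $\m\times(0,\infty)$.
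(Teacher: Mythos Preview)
Your proposal is correct and follows essentially the same approach as the paper: both proofs simply compute the pullback $s(\psi^\infty_s)^*\check g_N$ explicitly and read off the limit. The only difference is cosmetic---the paper first passes to the coordinate $\al=\ln t$, in which $\psi^\infty_s$ becomes the translation $(x,\al)\mapsto(x,\al+\ln s)$, and then writes down the same formula you obtain directly in $(x,t)$ coordinates.
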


\begin{proof}
It may be clearest to make a third change of coordinates,
writing $\al=\ln t$. Then 
$$\frac{g(0)}{t}+ \frac{N}{2t^3}dt^2=
e^{-\al}\(g(0)+\frac{N}{2}d\al^2\)$$
and the map $\psi^\infty_s$, viewed in $(x,\al)$ coordinates
as a map $\m\times\R\to\m\times\R$, corresponds to 
$$\psi^\infty_s(x,\al)=(x,\al+\ln s).$$
Moreover,
$$\cg_N=e^{-\al}\(g(e^\al)+\frac{N}{2}d\al^2\)
+\(Re^\al+\frac{n}{2}\)d\al^2,$$
and so 
$$\bar G(s)=e^{-\al}\(g(e^{\al+\ln s})+\frac{N}{2}d\al^2\)
+\(R s^2 e^{\al}+s\frac{n}{2}\)d\al^2,$$
\end{proof}

One consequence of this is 
that for any $x\in\m$ and sequence $t_i\downto 0$, 
we have convergence of pointed rescalings of the Canonical
Expanding Soliton:
$$(\check \m,t_i\check g_N,(x,t_i))\to (\Si_N,(x,\al=0))$$
as $i\to\infty$ 
in the sense of Cheeger-Gromov convergence \cite[\S 7]{RFnotes}.
It is also relevant to note that
$$(\Si_N,(x,\al=0))\to (\Si_\infty,(x,0))$$
as $N\to\infty$, where 
$$\Si_\infty := (\m,g(0))\times\R,$$
i.e. the cone straightens out to a cylinder.
In practice, we will be most interested in taking limits
of geometric quantities as $N\to\infty$ (despite the 
fact that the metric itself will degnerate) 
and then as $s\downto 0$
rather than the other way round.

\subsection{Using Canonical Expanding Solitons to give
Harnack inequalities}

We are now in a position to elaborate on the use of
Theorem \ref{CESthm} to obtain Harnack estimates.
It is well known that certain curvature conditions
are preserved under Ricci flow. For example, a
Ricci flow on a closed manifold 
which starts with weakly positive curvature
operator will also satisfy this property at later times.

Given a Ricci flow for which we would like a Harnack inequality,
the trick, effectively, is to apply this preservation principle 
not to the Ricci flow itself, but to its Canonical Expanding
Soliton. Given the asymptotics discussed in the previous section,
we should study Ricci flows $g(t)$ on an $n$-manifold $\m$, for which the curvature of $(\m,g(0))\times\R$ lies in a subspace of the
space of all possible curvatures which is
preserved under $(n+1)$-dimensional Ricci flow.
If we were to make the leap of faith that such a curvature condition
should also be preserved under the approximate Ricci flow
generated by the (approximate, incomplete) Canonical Soliton, then
we would deduce that $G(1)=\cg$ should satisfy this condition,
and this statement can be considered to be a Harnack inequality.

In order to make a precise statement of the Harnack inequalities,
we will pass to the limit $N\to\infty$. The Canonical Soliton
metrics $\cg_N$ degenerate in this limit, although if we view them as
metrics on $T^*\check\m$ rather than on $T\check\m$,
then they converge to a weakly positive definite 
tensor $\check g_\infty\in Sym^2(T\check\m)$
whose only nonzero components are $(\check g_\infty)^{ij}=tg^{ij}$.
More importantly, some of the geometric quantities such as curvature
associated with $\cg_N$ behave well in the
same limit, and the actual Harnack estimates will be
statements about them. 

Let V be a (real) vector space of dimension $m$. 
We call $\Rop\in \tensor^4 V^*$ an \emph{algebraic curvature tensor}
if it satisfies the symmetries of the curvature tensor of 
a Riemannian metric, including the first Bianchi identity (cf. Chapter V of \cite{KN}).
Given a manifold $\n^m$, we use the same terminology to describe a section 
of $\tensor^4 T^*\n$ which is an algebraic curvature tensor
in each fibre.

\begin{prop}
\label{Rminfinityprop}
In the setting of Theorem \ref{CESthm}, the full curvature
tensor $\Rop(\cg_N)$ of $\cg_N$, viewed as a section of
$\tensor^4 T^*\check\m$,
converges smoothly as $N\to\infty$ to a limit 
algebraic curvature tensor $\Rop_\infty$.
At a point $(x,t)$ in $\check \m$, the coefficients of the
tensor $\Rop_\infty$ are given in terms of the coefficients
of the curvature of $g(t)$ at the point $x$ by
\begin{align*}
{\stackrel{\infty}{R}}_{ijkl} &  = \frac1{t} R_{ijkl} \\
{\stackrel{\infty}{R}}_{i0j0} & =  \frac1{t}\(\lap R_{ij} + 2 R_{ikjl} R^{kl}   -R_i^k R_{jk} + \frac{R_{ij}}{2 t} - \frac1{2} \nabla_i \nabla_j R\)\\
{\stackrel{\infty}{R}}_{ij0k} & = \frac1{t}\(\nabla_i R_{jk} - \nabla_j R_{ik}\) 
\end{align*} 
Moreover, the Ricci curvature $\Ric(\cg_N)\in\Ga(Sym^2T^*\check\m)$
converges to a limit $\Ric_\infty$ determined by
$$\Ric_\infty\left(X+\pl{}{t},X+\pl{}{t}\right)=
\Ric_{g(t)}(X,X)+\langle X,\grad R\rangle_{g(t)}
+\half\left(\pl{R}{t}+\frac{R}{t}\right),$$
for each $X\in T\m$.
\end{prop}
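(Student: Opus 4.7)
The proof reduces to a direct computation with the explicit metric of Theorem \ref{CESthm}; the bulk of it is carried out in Appendix \ref{computations}, and the plan here is only to indicate the structure.

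\textbf{Strategy.} The crucial observation is that while $\cg_{00}$ grows like $N/(2t^3)$, its inverse satisfies $\cg^{00} = (N/(2t^3) + R/t + n/(2t^2))^{-1} = O(1/N)$, whereas $\cg^{ij} = t g^{ij}$ is independent of $N$ and $\cg^{0i}=0$. Consequently every appearance of $\cg^{00}$ in the downstream formulas is $N$-suppressed, while $\cg^{ij}$ contributes in the limit. Each entry of $\cg_N$ and $\cg_N^{-1}$ is a rational function of $N$, so the same is true of the Christoffel symbols and of the covariant Riemann tensor $\check R_{abcd}$; hence $\Rop_\infty := \lim_{N\to\infty}\Rop(\cg_N)$ exists and is smooth, and inherits the algebraic curvature symmetries from $\Rop(\cg_N)$ by continuity.

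\textbf{Extracting the limit.} First I would compute the Christoffel symbols $\check\Gamma^c_{ab}$, split according to how many $0$ indices appear, and then assemble $\check R_{abcd}$ similarly. The purely spatial components involve only the family $g_{ij}/t$ up to terms suppressed by $\cg^{00}$, yielding ${\stackrel{\infty}{R}}_{ijkl} = R_{ijkl}/t$. The mixed components $\check R_{ij0k}$ are driven by the spatial derivatives of $\partial_t g = -2\Ric$, producing the gradient-of-Ricci expression. The component $\check R_{i0j0}$ is the most delicate: its finite limit emerges after using the Ricci flow equation and the Lichnerowicz-type identity $\partial_t R_{ij} = \lap R_{ij} + 2 R_{ikjl} R^{kl} - 2 R_i^k R_{jk}$ to rewrite time derivatives of $\Ric$.

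\textbf{Ricci curvature.} For the second assertion I would use $\Ric_{ab}(\cg_N) = \cg_N^{cd}\,\check R_{cadb}(\cg_N)$. Since $\cg^{00} \to 0$ while $\cg^{ij} \to t g^{ij}$, only the spatial trace survives in the limit: $(\Ric_\infty)_{ab} = t\, g^{ij}\,{\stackrel{\infty}{R}}_{iajb}$. Plugging in the formulas from the first part and simplifying with the contracted second Bianchi identity $\grad^j R_{ij} = \half \grad_i R$ and the scalar evolution $\partial_t R = \lap R + 2|\Ric|^2$ gives $(\Ric_\infty)_{ij} = R_{ij}$, $(\Ric_\infty)_{0i} = \half \grad_i R$, and $(\Ric_\infty)_{00} = \half(\partial_t R + R/t)$, which assembles into the displayed formula when contracted with $X+\partial_t$.

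\textbf{Main obstacle.} The principal difficulty is bookkeeping: individual terms contributing to $\check R_{i0j0}$ are of size $N$ and must cancel pairwise to leave a finite limit, and one must confirm that the $O(1/N)$ errors implicit in the soliton approximation of Theorem \ref{CESthm} never accumulate into a finite contribution. Once the exact Christoffel and Riemann formulas are laid out as in Appendix \ref{computations}, however, the cancellations are purely algebraic and the stated formulas can be read off.
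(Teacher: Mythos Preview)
Your proposal is correct and follows essentially the same route as the paper: the paper's proof is the single sentence ``follows immediately from Appendix \ref{computations}'', and that appendix does exactly what you describe --- compute the Christoffel symbols of $\cg_N$, assemble the covariant Riemann components $\check R_{abcd}$ (using the Ricci-flow evolution of $\Ric$ to simplify time derivatives), and observe that every $N$-dependent correction enters through a factor of $\cg^{00}=O(1/N)$, so the limit is read off by setting $\cg^{00}=0$.

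One small inaccuracy in your ``Main obstacle'' paragraph: the covariant components $\check R_{i0j0}$ do \emph{not} contain individual terms of size $N$ that must cancel. Because the $N$-dependence of $\cg$ sits entirely in the spatially constant term $N/(2t^3)$ of $\cg_{00}$, the second partials $\partial_a\partial_b\cg_{cd}$ contributing to $\check R_{i0j0}$ are already $O(1)$, and the quadratic Christoffel terms are products of the form $\cg_{00}\cdot\check\Gamma^0_{\cdot\cdot}\cdot\check\Gamma^0_{\cdot\cdot}=O(N)\cdot O(1/N)\cdot O(1/N)$ or $\cg_{kl}\cdot\check\Gamma^k_{\cdot\cdot}\cdot\check\Gamma^l_{\cdot\cdot}=O(1)$. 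So each term is $O(1)$ from the outset, and the limit is obtained simply by dropping the $\cg^{00}$-suppressed pieces rather than by any delicate cancellation. This does not affect the validity of your argument, only the description of its difficulty.
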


The proposition follows immediately from Appendix \ref{computations}.

An algebraic curvature tensor with the same coefficients as $\Rop_\infty$ 
arises as the limit (as $\varepsilon \to \infty$ and $\delta \to 0$) of the Riemannian curvature associated to the two-parameter family of Riemannian metrics $\tilde g_{\varepsilon, \delta}$ introduced in \cite[\S4]{CC1}. Compare also with the definition of the $(0, 4)$-tensor $S$ in \cite{brendleharnack}.

The curvature converges also when viewed as any tensor of
type $(p,q)$ with $p+q=4$, but because the metric 
$\cg_N$ is degenerating in the limit, the assertion above
carries the most information.

The symmetries of an algebraic curvature tensor on a vector
space $V$, say,
allow one to see it as a symmetric bilinear form on $\Wedge^2 V$,
constrained further by the Bianchi identity.
We sometimes emphasise this viewpoint by calling 
such a constrained element of $Sym^2(\Wedge^2V^*)$
an \emph{(algebraic) curvature (bilinear) form},
and write the entire space of such forms $Sym_B^2(\Wedge^2V^*)$.
We will occasionally use the same notation for the space of 
algebraic curvature tensors, that is, 
we will switch between these two viewpoints 
implicitly,  often without changing notation.

Viewing $\Rop_\infty$ as a section of $Sym_B^2(\Wedge^2 T^*\check\m)$
(using the obvious extension of the notation above)
the full matrix Harnack inequality of Hamilton \cite{hamharnack}
is precisely equivalent to 
$\Rop_\infty$
being (weakly) positive definite. (This would normally be referred
to as weakly positive curvature operator, but as mentioned above,
because the limit
metric $\check g_\infty$ is degenerate, that would be a weaker assertion.)  
Hamilton's trace Harnack inequality is precisely equivalent to
$\Ric_\infty$ being weakly positive definite.
Brendle's Harnack inequality \cite{brendleharnack} is precisely equivalent to
$\Rop_\infty$ lying in a certain cone introduced by
Brendle and Schoen \cite{BS} 
(see Appendix \ref{wilking_appendix}).

Finally, if one were to assume that all Harnack inequalities
arose in this way, one would conclude that the long-sought Harnack 
inequality for Ricci flows $g(t)$ on 3-manifolds with weakly positive
Ricci curvature would be unreasonable.
One would deduce that $(\m,g(0))\times\R$ has weakly positive
Ricci curvature, but the framework of this paper would then require 
weakly positive Ricci curvature to be preserved for 4-dimensional
Ricci flows, which is false.

In the next section we find some other preserved curvature
conditions which yield new Harnack inequalities, and in
Section \ref{actsect} we explain to what extent one might
expect to generalise further.

\subsection{New Harnack inequalities}
\label{resultssect}

As we have indicated, our Harnack inequalities will be phrased 
in terms of the limiting curvature form $\Rop_\infty$
lying in appropriate subsets of $Sym^2_B(\Wedge^2T^*\check\m)$.
We now work towards defining some examples of such subsets.

Following \cite{micallef_moore} and \cite{wilking} 
we consider the complexified tangent bundle $T^\C\n$
of a manifold $\n$, and implicitly 
extend each curvature form complex linearly
to act on complexified 2-vectors -- i.e. elements of $\Wedge^2T^\C\n$.
To such a 2-vector $\om$, one can associate a rank defined
to be the least number $k\in\{0,1,2,\dots\}$ 
such that in each fibre
we can write $\om$ as a complex linear combination of at most
$k$ simple elements $u\wedge v$ with $u,v\in T^\C\n$.

With this viewpoint, we can define the following convex
cones within $Sym^2_B(\Wedge^2T^*\n)$, which arise from
`Wilking' cones as discussed in Appendix \ref{wilking_appendix}:
$$\cc_k(\n):=
\{\Rop\in Sym^2_B(\Wedge^2T^*\n)\ |\ \Rop(\om,\bar\om)\geq 0
\text{ for all }\om\in\Wedge^2T^\C\n
\text{ of rank no more than }k\}.$$

We will see that these cones 
are invariant under Ricci flow in the sense that if
$g(t)$ is a Ricci flow on a closed manifold $\m$ for $t\in [0,T]$
and $\Rop(g(0))\in \cc_k(\m)$, then $\Rop(g(t))\in \cc_k(\m)$
for all $t\in [0,T]$.
Note that the following theorem does not require $\m$
to be closed.

\begin{thm} (Main Harnack Theorem.)
\label{harnackthm}
Suppose $g(t)$ is a complete Ricci flow on a manifold $\m$ for 
$t\in (0,T]$ with scalar curvature uniformly bounded from
above, and for some $k\in\N$, 
$\Rop(g(t))\in \cc_k(\m)$ for all $t\in (0,T]$. 
Then $\Rop_\infty\in \cc_k(\check\m)$.
\end{thm}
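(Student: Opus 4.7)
The plan is to reduce the statement $\Rop_\infty\in\cc_k(\check\m)$ to a maximum principle argument on the base Ricci flow $g(t)$, with $t$ playing the role of the evolution parameter. I would proceed in three steps: (i) derive a reaction--diffusion PDE for $\Rop_\infty$, (ii) verify that the reaction term preserves the cone $\cc_k$, and (iii) supply an initial condition as $t\downto 0$ extracted from the asymptotic cone description in Section \ref{CESasymp}.

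For (i): by Proposition \ref{Geqprop}, the flow $G(s)=s\psi_s^*(\cg_N)$ is a genuine Ricci flow up to error $O(1/N)$, so its curvature operator satisfies Hamilton's standard evolution equation $\pl{\Rop}{s}=\lap\Rop+\Rop^2+\Rop^\#$ up to an error of the same order. Passing to the limit $N\to\infty$ via Proposition \ref{Rminfinityprop} and translating the $s$-derivative into a $t$-derivative on the base, one is led to an evolution equation
\beq
\label{Rinfevol}
\pl{\Rop_\infty}{t}=\lb\Rop_\infty+Q(\Rop_\infty),
\eeq
with $Q(\Rop)=\Rop^2+\Rop^\#$. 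I would then verify \eqref{Rinfevol} rigorously by differentiating the explicit formulas of Proposition \ref{Rminfinityprop} in $t$ using $\pl{g}{t}=-2\Ric$, following the computations of Hamilton \cite{hamharnack} and Brendle \cite{brendleharnack}. For (ii): Appendix \ref{wilking_appendix} exhibits $\cc_k$ as a Wilking cone on $Sym^2_B(\Wedge^2\R^{n+1})$, which is preserved by the ODE $\frac{d}{ds}\Rop=\Rop^2+\Rop^\#$ and hence, combined with the diffusion term, preserved by Hamilton's tensor maximum principle along any $(n+1)$-dimensional Ricci flow.

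For (iii): the asymptotic analysis of Section \ref{CESasymp} shows that, after the rescaling that makes the $\cc_k$-condition scale-invariant, $\Rop_\infty$ at $t\downto 0$ approaches the curvature of the cylinder $(\m,g(0))\times\R$; since $\Rop(g(0))\in\cc_k(\m)$ by hypothesis and $\cc_k$ is closed under taking a flat product with $\R$, this yields the initial datum required for \eqref{Rinfevol}. The main obstacle is then the rigorous application of the maximum principle: $\m$ is assumed only complete and the initial time $t=0$ is singular for $\Rop_\infty$. Completeness together with the one-sided scalar curvature bound and Shi-type derivative estimates handle spatial non-compactness through a standard cutoff argument, exactly as in \cite{hamharnack,brendleharnack}. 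The singular initial time is handled by starting the maximum principle at small $t_0>0$, where the asymptotic cone description shows $\Rop_\infty$ lies in $\cc_k$ up to controllable error, and then letting $t_0\downto 0$. With these ingredients in place, I would follow Brendle's scheme in \cite{brendleharnack} step by step, substituting his Brendle--Schoen cone by the general $\cc_k$ throughout and appealing to Appendix \ref{wilking_appendix} for the required invariance of $\cc_k$ under the reaction term.
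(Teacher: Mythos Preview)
Your plan matches the paper's proof closely: derive an evolution equation for $\Rop_\infty$, invoke Wilking's cone invariance for the reaction term, extract an initial condition from the small-$t$ asymptotics, and run Brendle's perturbation/contradiction argument with the cone $\cc_k$ in place of $\cc_1$. Two details deserve attention before you carry this out.

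First, the PDE you record is oversimplified: the actual equation (the paper's \eqref{mainformula}) is
\[
\lie_{t\partial_t}\Rop_\infty = t\,\lap_t\Rop_\infty + F(\Rop_\infty,\check g_\infty) + Q(\Rop_\infty,\check g_\infty) - \Rop_\infty,
\]
with $\lap_t$ built from the limiting connection $\stackrel{\infty}{\nabla}$ and the contractions in $F$, $Q$ taken against the \emph{degenerate} limit metric $\check g_\infty$. Because $\check g_\infty$ admits no orthonormal frame, Uhlenbeck's trick is unavailable and the $F$-term does not simply disappear; one must instead observe that $F$ points along the boundary of any $GL(V)$-invariant cone (Lemma \ref{GLlemma} and Corollary \ref{Cor_L3.3}), and this is precisely where the $GL$-invariance of $\cc_k$---as opposed to mere $O(V)$-invariance---enters the argument. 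The extra $-\Rop_\infty$ term is harmless since $K$ is a cone.

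Second, the paper deliberately avoids invoking an abstract ODE--PDE theorem, which would require fresh work in this degenerate, noncompact setting with no fixed bundle metric. Instead it argues directly by contradiction with the explicit perturbation $\Theta = t\Rop_\infty + \ep e^{\la t}\vph\, h\odot h$ (Lemma \ref{lambdalemma}); your intention to follow Brendle step by step will lead you to exactly this device.
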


\brmk
\label{traceresult}
Note that the cones $\cc_k$ are nested in the sense that
if $k_1\leq k_2$ then $\cc_{k_1}(\n)\supset \cc_{k_2}(\n)$.
Moreover, if $\Rop_\infty\in \cc_k(\check\m)$ for some
$k$ (even $k=1$) then $\Ric_\infty$ is positive 
definite, so we deduce Hamilton's trace Harnack inequality.
By definition, a metric whose curvature tensor lies 
in $\cc_1(\n)$ has weakly positive sectional curvature,
so the hypothesised upper bound for the scalar curvature
implies a uniform upper and lower bound on the full curvature
tensor.
\ermk

The highest possible rank of an element $\om\in\Wedge^2T\n$
is $[\frac{dim(\n)}{2}]$, and therefore if $k=[\frac{n+1}{2}]$,
the cone $\cc_k(\check\m)$ is precisely the weakly positive
definite elements of $Sym^2_B(\Wedge^2T^*\check\m)$, and we
recover the main result of \cite{hamharnack} stated in
a geometric form:

\begin{cor} (Equivalent to Hamilton \cite[Main Theorem]{hamharnack}.)
Suppose $g(t)$ is a complete Ricci flow on a manifold $\m$ for 
$t\in (0,T]$ with uniformly bounded curvature.
If $\Rop(g(t))$ is weakly positive definite for all $t\in (0,T]$
then $\Rop_\infty$ is weakly positive definite.
\end{cor}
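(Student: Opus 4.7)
The plan is to deduce this corollary essentially immediately from Theorem \ref{harnackthm}, by choosing the right index $k$ and invoking the dimensional considerations already flagged in the paragraph preceding the statement.

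First I would set $k = \left[\frac{n+1}{2}\right]$, which, as noted, is the maximal possible rank of any element $\om \in \Wedge^2 T^\C \check\m$ in fibres over the $(n+1)$-dimensional manifold $\check\m$. With this choice, the defining inequality of $\cc_k(\check\m)$ is required to hold for \emph{every} $\om\in\Wedge^2T^\C\check\m$, and hence $\cc_{[(n+1)/2]}(\check\m)$ coincides exactly with the weakly positive definite elements of $Sym^2_B(\Wedge^2T^*\check\m)$. So the conclusion we want, namely that $\Rop_\infty$ is weakly positive definite, is literally the statement $\Rop_\infty \in \cc_{[(n+1)/2]}(\check\m)$.

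Next, I would check that the hypotheses of Theorem \ref{harnackthm} hold with this same $k$. The uniform curvature bound on $\m$ assumed in the corollary is plainly stronger than the one-sided bound on scalar curvature required by the theorem, so that hypothesis is immediate. For the curvature cone hypothesis, the point is that on $\m$ (of dimension $n$) the maximum possible rank of a $2$-vector is $\left[\frac{n}{2}\right] \leq \left[\frac{n+1}{2}\right]$, so the condition $\Rop(g(t)) \in \cc_{[(n+1)/2]}(\m)$ imposes the inequality $\Rop(g(t))(\om,\bar\om)\geq 0$ on every $\om\in\Wedge^2T^\C\m$, which is precisely weak positive definiteness of $\Rop(g(t))$ on $\m$, i.e.\ the hypothesis of the corollary. (Equivalently, via the nesting observed in Remark \ref{traceresult}, weak positive definiteness sits inside every $\cc_k(\m)$.)

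Applying Theorem \ref{harnackthm} then gives $\Rop_\infty \in \cc_{[(n+1)/2]}(\check\m)$, and by the identification above this is exactly the desired conclusion. There is really no obstacle here beyond the rank counting in dimensions $n$ and $n+1$; the non-trivial work has been absorbed into Theorem \ref{harnackthm} itself. The only mild subtlety is to keep straight that $\m$ and $\check\m$ have different dimensions, so that the same index $k$ picks out different (but, in each case, maximal) cones on the two manifolds.
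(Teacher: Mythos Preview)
Your proposal is correct and matches the paper's own argument essentially line for line: set $k=\left[\frac{n+1}{2}\right]$, observe that on the $(n+1)$-dimensional $\check\m$ this makes $\cc_k(\check\m)$ coincide with the cone of weakly positive definite curvature forms, verify that the hypotheses of Theorem~\ref{harnackthm} are met (in particular that weak positive definiteness on $\m$ is the same as membership in $\cc_k(\m)$), and conclude. The only thing you add beyond the paper's one-line deduction is the explicit rank comparison between $\m$ and $\check\m$, which is helpful and correct.
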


The fact that the cone of weakly positive definite curvature
forms is preserved under Ricci flow is also, separately,
due to Hamilton \cite{ham4mfd}. 

The special case $k=1$ is the main result of
\cite{brendleharnack}:

\begin{cor} (Equivalent to 
Brendle \cite[Proposition 9]{brendleharnack}.)
Suppose $g(t)$ is a complete Ricci flow on a manifold $\m$ for 
$t\in (0,T]$ with scalar curvature uniformly bounded from above.
If $\Rop(g(t))\in \cc_1(\m)$ for all $t\in (0,T]$
then $\Rop_\infty\in \cc_1(\check\m)$.
\end{cor}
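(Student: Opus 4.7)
The plan is to treat the approximate Ricci flow $G(s)$ from Proposition \ref{Geqprop} as a vehicle for propagating the cone condition from the degenerate limit $s\downto 0$ to $s=1$: the asymptotics of Section \ref{CESasymp} identify the initial data, Hamilton's tensor maximum principle carries out the propagation, and finally one passes $N\to\infty$ to conclude the assertion about $\Rop_\infty$. The key algebraic input will be that each $\cc_k$ is invariant under the ODE reaction $Q(\Rop)=\Rop^{2}+\Rop^{\#}$ appearing in the curvature evolution; this is the content of Appendix \ref{wilking_appendix}.

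First I would derive the evolution equation satisfied by $\Rop(G(s))$. By Proposition \ref{Geqprop} this differs from the usual Ricci flow curvature equation only by a term built from $\psi_s^*(E_N)$, which the strong formulation of Theorem \ref{CESthm} makes locally $O(1/N)$ in every $C^k$ norm. Passing $N\to\infty$ and invoking Proposition \ref{Rminfinityprop} then yields a genuine Hamilton-type reaction-diffusion equation for $\Rop_\infty$, interpreted appropriately on the degenerate limit $\check g_\infty$.

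Next I would handle the initial condition. As $s\downto 0$ the rescaled flow $\bar G(s)$ converges (after $N\to\infty$) to $\Si_\infty=(\m,g(0))\times\R$, whose curvature is simply the pullback of $\Rop(g(0))$ from $\m$. The hypothesis $\Rop(g(0))\in\cc_k(\m)$ passes to this product because the extra $\partial/\partial\al$ direction contributes nothing to the curvature, and any complex $2$-vector on $\Si_\infty$ of rank at most $k$ projects to one on $\m$ of rank at most $k$. This provides the required asymptotic behaviour $\Rop_\infty\in\cc_k(\check\m)$ as $t\downto 0$.

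Finally I would run Hamilton's maximum principle in $s$ to propagate $\cc_k$ all the way to $s=1$, and then let $N\to\infty$. The hard part is not the algebraic structure but the rigour. The parabolic domain $\check\m=\m\times(0,T]$ is incomplete and $\m$ need not be closed, so one cannot apply a global maximum principle off the shelf. The asymptotic control from Section \ref{CESasymp} supplies the barrier at $t=0$; the non-closedness of $\m$ is handled by localising and exploiting the two-sided curvature bounds implied by the hypothesised upper bound on $R$ together with the sectional-curvature lower bound coming from $\cc_k\subset\cc_1$ (Remark \ref{traceresult}); and the $1/N$ errors are absorbed using the uniform control on $N[\check\grad^k E_N]$ guaranteed by Theorem \ref{CESthm}. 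In the spirit of Hamilton and Brendle, one would in practice introduce a small strictly-positive perturbation, propagate strict membership in $\cc_k$ to avoid touching the boundary of the cone, and then let the perturbation vanish at the end.
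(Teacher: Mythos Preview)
In the paper this corollary is simply Theorem~\ref{harnackthm} specialised to $k=1$; your proposal is therefore a sketch of the proof of that theorem. It reproduces the \emph{heuristic} of Section~2 rather than the rigorous argument of Section~\ref{proofsection}, and as written it has a structural inconsistency.

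You propose to run Hamilton's tensor maximum principle in the soliton time $s$ on the approximate flow $G(s)$ and ``then let $N\to\infty$'', yet one paragraph earlier you already passed $N\to\infty$ to obtain the limiting reaction--diffusion equation. The paper fixes a definite order: it evaluates the curvature evolution of $G(s)$ at $s=1$, passes $N\to\infty$ \emph{first} to obtain the exact equation \eqref{mainformula} for $\Rop_\infty$, and then runs the maximum principle in the physical Ricci-flow time $t$ (not $s$). The authors remark explicitly, just before Lemma~\ref{GLlemma}, that they avoid your route because it would require an ODE--PDE theorem on a noncompact manifold without a fixed underlying bundle metric ($\check g_\infty$ is degenerate). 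The initial barrier likewise does not come from the cone $\Si_\infty$ as you suggest: it comes from Proposition~\ref{firstgo}, which shows directly from the coefficients \eqref{Rmalpha} that $\Rop_\infty+\eta\,h\odot h$ lies in the interior of $K$ for large $\eta$. The perturbation actually used is $\Th=t\Rop_\infty+\ep e^{\la t}\vph\,h\odot h$, with $\vph$ an exhaustion function, and Lemma~\ref{lambdalemma} shows $\Th$ is strictly in the cone both for small $t$ and outside a compact spatial set. The contradiction is then obtained at a first touching point $(x_1,t_1)$ by testing against a single $\om\in S_k$ extended by $\stackrel{\infty}{\grad}$-parallel transport, using Corollary~\ref{Cor_L3.3} (a consequence of $GL$-invariance) to kill the $F$-term and Theorem~\ref{wilkingthm} to control the $Q$-term. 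Your list of ingredients (Wilking invariance, two-sided curvature bounds via Remark~\ref{traceresult}, a strict perturbation) is correct, but the organisation around $s$ and finite $N$ would not close as stated.
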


Note that the main result \cite[Theorem 1]{brendleharnack} 
as stated is a little weaker than this corollary, but it is 
deduced from \cite[Proposition 9]{brendleharnack} which
is just as strong. 
The preserved cone
$\cc_1$ was discovered by Brendle and Schoen \cite{BS}
and originally described as the cone of curvature forms
of manifolds $(\n,g)$ for which $(\n,g)\times\R^2$ has
weakly positive isotropic curvature. The  $\cc_1$ formulation 
has the advantage emphasising that the cone is independent
of the background metric (a fact appreciated  by Brendle in \cite{brendleharnack}).

For intermediate $k$, Theorem \ref{harnackthm} can
be considered to be some sort of interpolation between
these two corollaries. One has weaker hypotheses than 
Hamilton's result, and stronger conclusions than 
Brendle's result (but weaker conclusions than 
Hamilton's result, and stronger hypotheses than 
Brendle's result).

In the bigger scheme of things,
Hamilton's result has been crucial in the study of 
3-manifolds using Ricci flow. There, the blow-ups of singularities
do satisfy the generally rather restrictive hypothesis of
weakly positive curvature form, although the most useful
conclusion is the trace Harnack result (Remark \ref{traceresult}).
Brendle's result, having the weakest hypothesis, 
currently holds most
promise for applications to the study of manifolds of
positive isotropic curvature \cite{micallef_moore} via
Ricci flow.

\brmk
Although we have stated Theorem \ref{harnackthm} for
the cones $\cc_k$, the same proof works for more general
Wilking cones (see Appendix \ref{wilking_appendix}) 
with the additional assumption that they
can be defined independently of a background metric.
Moreover, the heuristics suggest that the result holds for 
more general convex cones
 which are invariant under the Ricci flow, as we describe in
in the next section, although we do not currently know of any 
suitable cones $K$ other than those already dealt with
directly in this paper.
\ermk

\subsection{More general cones}
\label{actsect}

We want to imagine more general convex cones $K$ within 
$Sym_B^2(\Wedge^2V^*)$, with ${\rm dim} V = n + 1$, 
so that the assertion of
a Harnack inequality will be that $\Rop_\infty\in K$.
(Recall that a convex cone $K$ in a vector space is a subset 
such that
if $\la\geq 0$ and $a,b\in K$, then $\la a\in K$ and $a+b\in K$.)
Given the explanations of previous sections, we would like
$K$ to be an invariant cone under Ricci flow, and since the
metric $\check g_\infty$ is degenerate, it will be 
appropriate to ask that the cone
is $GL(V)$-invariant. To clarify both notions:

\begin{defn}
\label{Kdef}
A cone $K$ within the vector space $Sym_B^2(\Wedge^2V^*)$ 
of algebraic curvature tensors on an $m$-dimensional
real vector space $V$ is called 
$GL(V)$-invariant if for each $T\in K$
and linear map $A\in GL(V)$, the tensor
$T_A\in Sym_B^2(\Wedge^2V^*)$
defined by
$$T_A (v_1,v_2,v_3,v_4):=T (A v_1,A v_2,A v_3,A v_4)$$
also lies in $K$.

Such a $GL(V)$-invariant 
cone $K$ which is also closed and convex, 
is said to be invariant under Ricci flow if
when we define, for $T\in Sym_B^2(\Wedge^2V^*)$
and positive definite 
$g\in Sym^2 V$, the tensor
\beq
\label{Qdef}
Q(T,g)_{abcd}:=2g^{\al\ga}g^{\be\de}\left[
T_{a\al b\be}T_{c\ga d\de}
-T_{a\al b\be}T_{d\ga c\de}
+T_{a\al c\be}T_{b\ga d\de}
-T_{a\al d\be}T_{b\ga c\de}
\right]
\eeq
then for all $T\in \partial K$ and all (equivalently one) $g$, 
the tensor $Q(T,g)$ 
points into the interior of $K$ at $T$.  
That is, for any $\Ups\in (Sym^2_B(\Wedge^2 V^*))^*$
such that $\Ups(\tilde T-T) > 0$ implies
$\tilde T\notin K$, we have $\Ups(Q(T,g))\leq 0$.
Or, alternatively phrased (see \cite[Lemma 4.1]{ham4mfd} for a proof of the equivalence) the cone $K$ is preserved under
the ODE $\dot T = Q(T,g)$ (for any or all $g$).
\end{defn}

Note that we can talk about a curvature form/tensor $\Rop$
on a manifold $\n^m$ lying in a $GL(V)$-invariant cone $K$:
we ask that at each point $x\in \n$, after identifying
$T_x\n$ with $V$ via an arbitrary linear bijection,
we have $\Rop(x)\in K$.

The terminology of the definition 
is justified because Hamilton's ODE-PDE theorem (see Theorem 4.3 in \cite{ham4mfd})
tells us that if $g(t)$ is a Ricci flow on a closed manifold 
such that $\Rop(g(0))$ lies in a cone $K$ as in Definition 
\ref{Kdef}, then $\Rop(g(t))\in K$ for all later times $t$.

\begin{conj}
\label{generalconj}
Let $V$ be a vector space of dimension $n + 1$. 
Suppose $K$ is a closed $GL(V)$-invariant 
convex cone within $Sym^2_B(\Wedge^2 V^*)$
which contains all weakly positive definite curvature forms, and is invariant under the Ricci flow in the sense 
of Definition \ref{Kdef}.
Suppose further than $g(t)$ is a Ricci flow on a manifold $\m^n$ for 
$t\in (0,T]$ with uniformly bounded curvature, 
and $\Rop((\m,g(t))\times \R)\in K$ for all $(x, t) \in \check \m$. 
Then $\Rop_\infty\in K$.
\end{conj}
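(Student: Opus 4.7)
The plan is to adapt the proof of Theorem \ref{harnackthm} to the general cone $K$, using only the abstract invariance from Definition \ref{Kdef} together with the approximate Ricci flow $G(s)$ of Proposition \ref{Geqprop}. The heuristic is exactly the one elaborated in Section 2.3: the initial data of $G(s)$ as $s \downto 0$ is, in the $N \to \infty$ limit, the cylinder $(\m, g(0)) \times \R$, whose curvature lies in $K$ by hypothesis; Ricci-flow invariance of $K$, applied to the almost-Ricci-flow $G(s)$, should then propagate this to $G(1) = \cg_N$, giving $\Rop_\infty \in K$ after sending $N \to \infty$.

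Concretely, I would first derive the evolution equation for $\Rop(G(s))$. Since $\pl{G}{s} = -2\Ric(G(s)) + \psi_s^*(E_N)$ with $E_N$ of order $1/N$ in the sense of \eqref{meaning}, the standard Ricci flow computation gives
\[
\pl{}{s}\Rop(G(s)) = \lap_{G(s)}\Rop(G(s)) + Q(\Rop(G(s)), G(s)) + \mathcal{F}_N(s),
\]
where $\mathcal{F}_N(s)$, together with finitely many covariant derivatives, is $O(1/N)$ uniformly on compact subsets of $\check\m$. The $GL(V)$-invariance of $K$ makes the pointwise assertion $\Rop \in K$ meaningful on $\check\m$ without reference to $\cg_\infty$, so the claim $\Rop_\infty \in K$ is well-posed even though the limiting metric is degenerate.

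For the initial data, the asymptotic computation of Section \ref{CESasymp} shows that as $s \downto 0$ the rescaled flow approaches the shallow cone $\Sigma_N$, which Cheeger--Gromov converges to $(\m, g(0)) \times \R$ as $N \to \infty$. Choosing a continuous convex distance function $\phi$ to $K$ (available since $K$ is closed and convex), this yields $\phi(\Rop(G(s_0))) \leq \ep$ for all sufficiently small $s_0$ and sufficiently large $N$. Next I would run an approximate Hamilton ODE--PDE preservation argument on $s \in [s_0, 1]$: by Definition \ref{Kdef}, at any boundary point of $K$ the reaction $Q$ is tangent to or points into $K$, so $\phi(\Rop(G(s)))$ satisfies a differential inequality of the form $\partial_s \phi \leq \lap \phi + C/N$ along the flow. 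A maximum principle on the non-compact $\check\m$, justified by the uniform curvature bounds on $g(t)$ which translate into controlled bounds for $\Rop(\cg_N)$ on compact subsets, then gives $\phi(\Rop(G(1))) \leq C(\ep + 1/N)$. Passing first $N \to \infty$ (using Proposition \ref{Rminfinityprop}) and then $\ep \downto 0$, closedness of $K$ forces $\phi(\Rop_\infty) = 0$, i.e., $\Rop_\infty \in K$.

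The main obstacle is making this maximum-principle step uniform in $N$ on the non-compact $\check\m = \m \times (0,T]$, especially near the parabolic boundary $t = 0$ where $\cg_N$ degenerates and the cone-to-cylinder approximation is only asymptotic; one probably needs either an explicit supersolution tailored to $K$ or a careful cutoff built from the bounded-curvature hypothesis. A second, closely related difficulty is that Definition \ref{Kdef} only requires $Q$ to point \emph{into} $K$ at $\partial K$, not strictly so; the $O(1/N)$ forcing $\mathcal{F}_N$ could then push $\Rop(G(s))$ marginally outside $K$ at a boundary point, and absorbing this error without any strict-inward or non-degeneracy hypothesis on $K$ is delicate. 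Pinning down the precise additional structural condition on $K$ that lets the perturbation argument close on a non-compact space is presumably the missing ingredient which keeps the statement a conjecture rather than a theorem.
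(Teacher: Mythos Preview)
The statement is labelled a \emph{conjecture} in the paper and is not proved there; there is no proof to compare against. The paper does, however, sketch its own preferred heuristic route in Section~3.2, and it differs from yours: rather than working with the approximate flow $G(s)$ for finite $N$, the paper passes to the limit first, derives the exact evolution equation \eqref{mainformula} for $\Rop_\infty$, and then remarks that an appropriate ODE--PDE theorem for \emph{that} equation (on a noncompact manifold, without a fixed underlying bundle metric) would yield Conjecture~\ref{generalconj}. The paper explicitly declines to attempt this, and for the special Wilking cones $\cc_k$ of Theorem~\ref{harnackthm} takes a third, more hands-on route: the perturbation $\Th = t\Rop_\infty + \ep e^{\la t}\vph\, h\odot h$ together with a scalar maximum-principle argument that exploits the description of $\cc_k$ via test $2$-vectors $\om$. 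Your finite-$N$ strategy trades the paper's degenerate-metric difficulty for an $O(1/N)$ forcing term; neither trade is obviously better, and you correctly identify the absorption of that forcing near $\partial K$ on a noncompact domain as the real obstruction.

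Two smaller gaps in your sketch are worth flagging. First, your evolution equation for $\Rop(G(s))$ omits the term $F(\Rop,G)$ of \eqref{Fdef}; the paper's \eqref{Rmeq2} retains it, and its harmlessness for $GL(V)$-invariant cones is a separate fact (Lemma~\ref{GLlemma} and Corollary~\ref{Cor_L3.3}) that your argument needs to invoke rather than silently drop. Second, the inequality $\partial_s\phi \leq \lap\phi + C/N$ for the convex distance $\phi$ to $K$ is not justified away from $\partial K$: Definition~\ref{Kdef} controls $Q(T,g)$ only for $T\in\partial K$, not at exterior points, so converting ``$Q$ points into $K$ on $\partial K$'' into a global subsolution inequality for $\phi$ requires extra structure on $K$ (some quantitative inward transversality of $Q$) that is not part of the hypotheses. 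This is really another face of the same missing ingredient you name in your final paragraph.
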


It would be particularly interesting in this context to find
such a $GL(V^{n+1})$-invariant cone containing the cone ${\cal C}(S_1)$
of Appendix \ref{wilking_appendix}, but contained in
the cone of curvature operators with 
weakly positive sectional curvature.

\section{Proof of the Harnack estimates.}
\label{proofsection}

In this section we give a rigorous proof of Theorem
\ref{harnackthm} following the ideas of the previous section,
and the earlier papers of Hamilton \cite{hamharnack} and
Brendle \cite{brendleharnack}.

\subsection{Applying the Canonical Expanding Soliton to derive 
equations satisfied by $\Rop_\infty$}

We have seen how the curvature of the metrics $\cg_N$ converges
to $\Rop_\infty$ as $N\to\infty$, and in this section we will
apply what we know about the curvatures of Ricci flows to 
derive information about this limit. More precisely,
we want to derive a parabolic-type equation 
\eqref{mainformula} which it satisfies.
We will also have to
consider one further object which behaves well in the 
same limit, namely the connection.

\begin{prop}
In the setting of Theorem \ref{CESthm}, the Levi-Civita 
connection $\check\grad$ of $\cg_N$
converges smoothly as $N\to\infty$ to a limit 
$\stackrel{\infty}{\grad}$.
More precisely, after choosing local coordinates 
$\{x_1,\ldots,x^n,t\}$ near some
point in $\check\m$, the Christoffel symbols with respect 
to those coordinates will converge smoothly.
At a point $(x,t)$ in $\check \m$, the Christoffel symbols
of $\stackrel{\infty}{\grad}$ are given in terms of the coefficients
of $g(t)$ and its curvature at the point $x$ by
$$
\stackrel{\infty}{\Ga}^{\raisebox{-1.5ex}{\scriptsize  i}}_{jk} =\Ga^i_{jk};\quad
\stackrel{\infty}{\Ga}^{\raisebox{-1.5ex}{\scriptsize  i}}_{j0}= - \left({R^i}_j+\frac{{\de^i}_j}{2 t}\right);\quad
\stackrel{\infty}{\Ga}^{\raisebox{-1.5ex}{\scriptsize  i}}_{00}=-\half g^{ij}\pl{R}{x^j};\quad
$$
$$
\stackrel{\infty}{\Ga}^{\raisebox{-1.5ex}{\scriptsize  $0$}}_{jk}=0;\quad
\stackrel{\infty}{\Ga}^{\raisebox{-1.5ex}{\scriptsize  $0$}}_{i0}=0;\quad
\stackrel{\infty}{\Ga}^{\raisebox{-1.5ex}{\scriptsize  $0$}}_{00}=-\frac{3}{2t}
$$
\end{prop}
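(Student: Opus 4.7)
The proof is a direct computation: apply the standard formula
$\check\Ga^k_{ij}=\tfrac12\check g^{kl}\bigl(\partial_i\check g_{jl}+\partial_j\check g_{il}-\partial_l\check g_{ij}\bigr)$
to the explicit metric $\check g_N$ from Theorem~\ref{CESthm}, expand in powers of $1/N$, and pass to the limit. First I would record the inverse metric. Because $\check g$ is block diagonal, the off-block components of $\check g^{-1}$ vanish, and one has $\check g^{ij}=tg^{ij}$, while
\[
\check g^{00}=\frac{1}{\frac{N}{2t^3}+\frac{R}{t}+\frac{n}{2t^2}}=\frac{2t^3}{N}\Bigl(1+O(1/N)\Bigr),
\]
so $\check g^{00}\to 0$ at the controlled rate $O(1/N)$, with all higher-order terms smooth in $t$, $R$, and $1/N$.

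Next I would split into cases according to how many indices are $0$. For three spatial indices the block-diagonal structure forces
$\check\Ga^k_{ij}=\tfrac12(tg^{kl})\bigl(\partial_i(g_{jl}/t)+\partial_j(g_{il}/t)-\partial_l(g_{ij}/t)\bigr)=\Ga^k_{ij}$, already independent of $N$. For $\check\Ga^i_{j0}$ only the $\partial_0\check g_{jk}$ term survives, and using the Ricci flow equation $\partial_tg_{jk}=-2R_{jk}$ gives
\[
\check\Ga^i_{j0}=\tfrac12(tg^{ik})\partial_t(g_{jk}/t)=-R^i{}_j-\frac{\de^i{}_j}{2t},
\]
again $N$-independent. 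For $\check\Ga^i_{00}$ the surviving piece is $-\tfrac12\check g^{ik}\partial_k\check g_{00}$; only the $R/t$ term in $\check g_{00}$ depends on spatial coordinates, so the $N/(2t^3)$ piece drops out and one obtains $-\tfrac12 g^{ij}\partial_jR$, once more $N$-independent.

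For the symbols carrying an upper $0$ index the factor $\check g^{00}=O(1/N)$ appears; the question is whether the remaining factor is $O(N)$ or bounded. For $\check\Ga^0_{jk}$ and $\check\Ga^0_{i0}$ the other factor is $O(1)$ in $N$, so the products vanish in the limit. For $\check\Ga^0_{00}=\tfrac12\check g^{00}\partial_t\check g_{00}$, however, $\partial_t\check g_{00}=-3N/(2t^4)+O(1)$, and the $N$-dependence of $\check g^{00}$ cancels exactly the $N$ in $\partial_t\check g_{00}$, leaving
\[
\check\Ga^0_{00}=\tfrac12\cdot\tfrac{2t^3}{N}\cdot\Bigl(-\tfrac{3N}{2t^4}\Bigr)+O(1/N)=-\tfrac{3}{2t}+O(1/N).
\]
This is the only place where a delicate cancellation occurs and is where I would be most careful to keep track of the $1/N$ correction terms in $\check g^{00}$.

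Smoothness of the convergence is automatic: every Christoffel symbol is a rational function of $N$ whose denominator is a polynomial in $1/N$ with nonzero constant term in the relevant range of $t$, and each is polynomially bounded in derivatives of $g$ and $R$. Differentiating the explicit expressions in the $(x,t)$ coordinates therefore produces expressions that converge uniformly locally as $N\to\infty$, which is the content of ``smoothly in the arguments $x,t$''. No obstacle beyond bookkeeping is anticipated.
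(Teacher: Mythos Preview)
Your proposal is correct and follows essentially the same approach as the paper: the paper simply states that the result ``is immediate from Appendix~\ref{computations}'', where the exact Christoffel symbols of $\check g_N$ are computed via the same formula you use, yielding expressions in which the only $N$-dependence enters through the factor $\check g^{00}=\check g_{00}^{-1}=O(1/N)$; your case-by-case discussion reproduces precisely those formulae and their limits.
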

The proof is immediate from Appendix \ref{computations}.

The above connection $\stackrel{\infty}{\grad}$ also arises
in Brendle \cite{brendleharnack}. Connections with similar Christoffel symbols can be found in \cite{CC1} (see also the modified version in \cite{CLN}) and \cite{ChK}.

This connection also induces a limiting Laplacian
\beq
\label{laptdef}
\lap_t:=
g^{ij}\stackrel{\infty}{\grad}_i\stackrel{\infty}{\grad}_j
\eeq
which can be applied to any tensor on $\check\m$.
Because $\stackrel{\infty}{\Ga}^{\raisebox{-1.5ex}{\scriptsize  $0$}}_{ij}=0$, we readily check
the following assertion.
\begin{lemma}
\label{laptlem}
If $f:\check\m\to\R$  then
$$\lap_t f=\lap_{g(t)}\left[f(\cdot,t)\right].$$ 
In particular, for any tensor $S$ on $\check\m$ and 
$F:\R\to\R$, we have $\lap_t (F(t) S) = F(t)\lap_t S$.
\end{lemma}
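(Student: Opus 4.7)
The plan is to unpack the definition $\lap_t = g^{ij}\stackrel{\infty}{\grad}_i \stackrel{\infty}{\grad}_j$ and substitute the explicit Christoffel symbols provided by the preceding proposition. For a scalar function $f:\check\m\to\R$, the first covariant derivative is just the partial derivative, $\stackrel{\infty}{\grad}_j f=\partial_j f$, so the second covariant derivative reads
\begin{equation*}
\stackrel{\infty}{\grad}_i\stackrel{\infty}{\grad}_j f=\partial_i\partial_j f-\stackrel{\infty}{\Ga}{}^{k}_{ij}\partial_k f-\stackrel{\infty}{\Ga}{}^{0}_{ij}\partial_t f.
\end{equation*}
The two inputs I would emphasise are: (i) $\stackrel{\infty}{\Ga}{}^0_{ij}=0$ for spatial indices $i,j\in\{1,\ldots,n\}$, which kills the $\partial_t f$ term entirely (this is precisely the hint the authors flag); and (ii) $\stackrel{\infty}{\Ga}{}^k_{ij}=\Ga^k_{ij}$, the Christoffel symbols of $g(t)$ for each fixed $t$. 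Contracting the resulting expression with $g^{ij}$ gives
\begin{equation*}
g^{ij}\stackrel{\infty}{\grad}_i\stackrel{\infty}{\grad}_j f=g^{ij}\bigl(\partial_i\partial_j f-\Ga^k_{ij}\partial_k f\bigr),
\end{equation*}
which is exactly the $g(t)$-Laplacian applied to the function $x\mapsto f(x,t)$, proving the first assertion.

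For the second assertion I would argue as follows. Since $F$ depends only on the time coordinate, its spatial partial derivatives vanish: $\partial_j F(t)=0$ for $j\in\{1,\ldots,n\}$. The Leibniz rule therefore gives $\stackrel{\infty}{\grad}_j(F(t)S)=F(t)\stackrel{\infty}{\grad}_j S$ for each spatial $j$. Iterating, $\stackrel{\infty}{\grad}_i \stackrel{\infty}{\grad}_j(F(t)S)=F(t)\stackrel{\infty}{\grad}_i\stackrel{\infty}{\grad}_j S$ for spatial $i,j$, since again the first factor contributes nothing when hit by $\partial_i$. Contracting with $g^{ij}$ (which carries no $F$-factor) yields $\lap_t(F(t)S)=F(t)\lap_t S$.

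I do not anticipate a real obstacle here: both assertions are direct unwrapping of the definition combined with the explicit Christoffel data of the limit connection. The only conceptual point — and the reason the lemma deserves an independent statement — is the vanishing of the off-diagonal block $\stackrel{\infty}{\Ga}{}^0_{ij}$, which makes $\lap_t$ behave as a genuinely spatial operator despite living on the $(n+1)$-dimensional space $\check\m$.
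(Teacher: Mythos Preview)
Your argument is correct and matches the paper's approach exactly: the paper simply remarks that the lemma follows readily from $\stackrel{\infty}{\Ga}{}^{0}_{ij}=0$, and you have spelled out precisely that verification. One small clarification worth making explicit in the second part: the iteration step $\stackrel{\infty}{\grad}_i\stackrel{\infty}{\grad}_j(F(t)S)=F(t)\stackrel{\infty}{\grad}_i\stackrel{\infty}{\grad}_j S$ uses not only $\partial_i F=0$ but again the vanishing $\stackrel{\infty}{\Ga}{}^{0}_{ij}=0$, since the second covariant derivative contracts a Christoffel symbol against \emph{all} components $(\stackrel{\infty}{\grad}(F(t)S))_a$ including $a=0$, where the extra $F'(t)S$ term would otherwise survive --- you clearly recognise this in your closing paragraph, so this is just a matter of placing the observation at the right point in the argument.
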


Recall now
(see for example \cite[Proposition 2.5.1]{RFnotes}) that
the curvature $\Rop$ of a Ricci flow $g(t)$ satisfies
the equation 
\beq
\label{Rmeq}
\pl{\Rop}{t}=\lap\Rop + F(\Rop,g)+Q(\Rop,g)
\eeq
where $Q$ was defined as in \eqref{Qdef} and $F$ is a map of
the same type defined by
\begin{align} \label{Fdef}
F(T,g)_{abcd} :&=
-g^{\al\be}g^{\ga\de}\left[
T_{\al bcd}T_{a\ga\be\de}+T_{a\al cd}T_{b\ga\be\de}
+T_{ab\al d}T_{c\ga\be\de}+T_{abc\al}T_{d\ga\be\de}\right] \nonumber
\\ & =-g^{\al\be}\left[
T_{\al bcd}T_{a\be}+ T_{a\al cd}T_{b\be}
+T_{ab\al d}T_{c\be}+T_{abc\al}T_{d\be}\right],
\end{align}
where $T_{a b} = g^{cd} T_{acbd}$.

As usual, we wish to apply this type of flow equation not to the
Ricci flow $g(t)$ under consideration, but to the
flow $G(s)$ of its Canonical Soliton $\cg_N$.
By Theorem \ref{CESthm}, Proposition \ref{psiprop} and
Proposition \ref{Geqprop} we have
$$\pl{G}{s}\simeq -2\Ric(G(s)),$$
Where $\simeq$ is used in the precise sense of Theorem
\ref{CESthm}, that is, the difference of the left-hand side 
and the right-hand side can be differentiated at will
using $\check\grad$, and multiplied by $N$, and will still
remain locally bounded as $N\to\infty$. 
Therefore, a glance at the derivation of \eqref{Rmeq}
shows that in fact
\beq
\label{Rmeq2}
\pl{\Rop_{G(s)}}{s}\simeq\lap_{G(s)}\Rop_{G(s)} + F(\Rop_{G(s)},G(s))
+Q(\Rop_{G(s)},G(s)).
\eeq

We may now set $s=1$ and take the limit $N\to\infty$.
Clearly 
$$F(\Rop_{G(1)},G(1))=F(\Rop_{\cg_N},\cg_N)\to F(\Rop_\infty,\check g_\infty)$$
and 
$$Q(\Rop_{G(1)},G(1))=Q(\Rop_{\cg_N},\cg_N)\to Q(\Rop_\infty,\check g_\infty)$$
as $N\to\infty$.
Meanwhile,
\begin{align*}
\pl{\Rop_{G(s)}}{s}&=\pl{}{s}\left[s\Rop_{\psi_s^*(\cg)}\right]
=\pl{}{s}\left[s\psi_s^*(\Rop_{\cg})\right]\\
&=\psi_s^*\left[\Rop_{\cg}
+s\lie_{\frac{1}{s}\check\grad(-\frac{N}{2t})}\Rop_{\cg}\right]\\
&\simeq\psi_s^*\left[\Rop_{\cg}
+\lie_{t\pl{}{t}}\Rop_{\cg}\right]\\
\end{align*}
by \eqref{Xformula}.
Evaluating at $s=1$ (where $\psi_1$ is the identity)
we find that
\beq
\pl{\Rop_{G(s)}}{s}\bigg|_{s=1}\to \Rop_\infty
+\lie_{t\pl{}{t}}\Rop_\infty
\eeq
as $N\to\infty$.
Finally, 
\begin{align*}
\lap_{G(1)}\Rop_{G(1)}&=\lap_{\cg}\Rop_{\cg} =
\cg^{ij}\check\grad_i\check\grad_j\Rop_{\cg} 
+ \cg^{00}\check\grad_0\check\grad_0\Rop_{\cg}
\end{align*}
and therefore 
\beq
\lap_{G(1)}\Rop_{G(1)}\to t\lap_t \Rop_\infty,
\eeq
where $\lap_t$ is from \eqref{laptdef}.

Combining all these formulae and passing to the limit 
in \eqref{Rmeq2}, we find the desired formula
\beq
\label{mainformula}
\lie_{t\pl{}{t}}\Rop_\infty=
t\lap_t \Rop_\infty + F(\Rop_\infty,\check g_\infty) + Q(\Rop_\infty,\check g_\infty)
-\Rop_\infty.
\eeq

\subsection{Being on the boundary of a $GL(V)$-invariant cone}

Suppose we have a convex cone $K$ in the space of algebraic curvature
forms which is 
$GL(V)$-invariant in the sense of Definition \ref{Kdef}.
In this section we show some useful identities which hold
whenever we are on the boundary of $K$, which will imply
that the second  
term on the right-hand side of 
\eqref{mainformula} points into the interior of $K$
when $\Rop_\infty\in \partial K$.  
If we assume that $K$ is invariant 
under Ricci flow (again in the sense of Definition \ref{Kdef})
then the third term on the right-hand side of 
\eqref{mainformula} points into the interior of $K$
when $\Rop_\infty\in \partial K$. 
Since $K$ is a cone, the same is true for the fourth term
on the right-hand side of \eqref{mainformula}.
One can speculate then that an appropriate ODE-PDE
theorem (similar to that proved by Hamilton in \cite{ham4mfd}) 
applied to $\Rop_\infty$ viewed as an appropriate 
time dependent section of a bundle over $\m$,
would tell us that
the PDE \eqref{mainformula} would leave $K$ invariant, and
that would give a proof of Conjecture \ref{generalconj}.

To avoid having to prove an appropriate ODE-PDE theorem 
(handling noncompact manifolds and without 
a fixed underlying bundle metric) we will take a more
direct route in this paper.

\begin{lemma}
\label{GLlemma}
If $\Th$ is an algebraic curvature tensor on the boundary 
of a $GL(V)$-invariant cone $K\subset Sym^2_B(\Wedge^2 V^*)$, 
then for any
$v_1,\ldots,v_4\in V$ and endomorphism $A\in V\otimes V^*$,
the   
tensor
$$(v_1,v_2,v_3,v_4)\mapsto
\Th(Av_1,v_2,v_3,v_4)+\Th(v_1,Av_2,v_3,v_4)+\Th(v_1,v_2,Av_3,v_4)+
\Th(v_1,v_2,v_3,Av_4)$$
points into the interior of $K$. Moreover, it points along the
boundary in the sense that its negation also points into the
interior.
\end{lemma}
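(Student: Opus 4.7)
The plan is to exploit the $GL(V)$-invariance of $K$ through a one-parameter family of linear maps and read off the desired tangency from the first variation.

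First I would consider, for a real parameter $\epsilon$, the endomorphism $A_\epsilon := \id + \epsilon A$. For $|\epsilon|$ small enough (smaller than the reciprocal of the operator norm of $A$), the map $A_\epsilon$ lies in $GL(V)$, so by the $GL(V)$-invariance of $K$ the tensor
$$T_\epsilon(v_1,v_2,v_3,v_4) := \Theta(A_\epsilon v_1, A_\epsilon v_2, A_\epsilon v_3, A_\epsilon v_4)$$
belongs to $K$ for all such $\epsilon$. Expanding $A_\epsilon v_i = v_i + \epsilon A v_i$ and using multilinearity, one sees immediately that $T_0 = \Theta$ and that
$$\frac{d}{d\epsilon}\bigg|_{\epsilon=0} T_\epsilon(v_1,v_2,v_3,v_4) = \Theta(Av_1,v_2,v_3,v_4)+\Theta(v_1,Av_2,v_3,v_4)+\Theta(v_1,v_2,Av_3,v_4)+\Theta(v_1,v_2,v_3,Av_4),$$
which is exactly the tensor $D_A\Theta$ that appears in the statement of the lemma. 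In particular $T_\epsilon = \Theta + \epsilon D_A\Theta + O(\epsilon^2)$ as $\epsilon \to 0$, in any fixed norm on $Sym^2_B(\Wedge^2 V^*)$.

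Now take any supporting functional $\Upsilon \in (Sym^2_B(\Wedge^2 V^*))^*$ at $\Theta$, i.e.\ any $\Upsilon$ such that $\Upsilon(\tilde T - \Theta) > 0$ forces $\tilde T \notin K$. Since $T_\epsilon \in K$ for every sufficiently small $\epsilon$ of either sign, we must have $\Upsilon(T_\epsilon - \Theta) \leq 0$ for all such $\epsilon$. Substituting the expansion above gives
$$\epsilon\,\Upsilon(D_A\Theta) + O(\epsilon^2) \leq 0$$
for all small $\epsilon\in\R$, and sending $\epsilon\downto 0$ and $\epsilon\upto 0$ successively forces $\Upsilon(D_A\Theta) \leq 0$ and $\Upsilon(D_A\Theta) \geq 0$, hence $\Upsilon(D_A\Theta) = 0$. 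In the language of Definition \ref{Kdef}, this says both $D_A\Theta$ and $-D_A\Theta$ point into the interior of $K$ at $\Theta$, which is precisely the two assertions of the lemma.

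I do not anticipate a substantive obstacle: the only thing to verify with care is that the argument is purely algebraic (no smoothness or metric is used beyond linearity in $V$), and that the two-sided variation $\epsilon \in (-\epsilon_0, \epsilon_0)$ is available, which it is because $\id + \epsilon A$ is invertible for both signs of small $\epsilon$. The conceptual content is simply that $D_A$ is the infinitesimal generator of the $GL(V)$ action on curvature tensors, so $GL(V)$-invariance of $K$ is the same as invariance of $\partial K$ under the flow generated by $D_A$, hence $D_A\Theta \in T_\Theta(\partial K)$.
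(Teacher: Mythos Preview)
Your proof is correct and follows essentially the same approach as the paper: both consider the one-parameter family $\Th_{(I+\epsilon A)}$, note it stays in $K$ by $GL(V)$-invariance, and differentiate at $\epsilon=0$. The paper is slightly terser (it observes directly that $\Th_\al\in\partial K$, since the $GL(V)$-action is by homeomorphisms preserving $K$, and then just says ``differentiating gives the first assertion''), whereas you spell out the supporting-functional argument explicitly; but the content is identical.
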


\begin{proof}
If $\Th\in\partial K$, then $\Th_\al$ defined for $\al\in\R$ near $0$
by
$$\Th_\al(v_1,v_2,v_3,v_4):=
\Th((I+\al A)v_1,(I+\al A)v_2,(I+\al A)v_3,(I+\al A)v_4)$$
must also lie in $\partial K$ by virtue of the $GL(V)$-invariance.
Differentiating with respect
to $\al$ at $\al=0$ gives the first assertion, and the
rest follows by changing the sign of $A$.
\end{proof}

As a particular case, we have that $F(\Th,\check g_\infty)$ 
as defined in \eqref{Fdef} points along the boundary of $K$. Accordingly,
\begin{cor} \label{Cor_L3.3}
If the cone $K$ from Lemma \ref{GLlemma} is also a Wilking Cone ${\cal C}(S)$
(see Appendix \ref{wilking_appendix}) and $\dim V = n + 1$, then 
 $$F(\Th,\check g_\infty)(\om,\bar\om)=0 \qquad \text{whenever } \qquad  \Th(\om,\bar \om)=0 \quad \text{for } \om\in\Wedge^2 (V^\C).$$
\end{cor}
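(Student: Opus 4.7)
The plan is to recognise that $F(\Th, \check g_\infty)$ is precisely an instance of the tensor treated in Lemma \ref{GLlemma}, so that the corollary reduces to the observation that a tangent direction at a boundary point of a convex cone lies in the kernel of every supporting linear functional.

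Concretely, I would first unwind the definition \eqref{Fdef}. Introducing the endomorphism $B\in V\otimes V^*$ defined by
\[ B^{\al}{}_{a}:=-(\check g_\infty)^{\al\be}\,\Th_{a\be}, \qquad \Th_{ab}:=(\check g_\infty)^{cd}\Th_{acbd}, \]
a direct index comparison with \eqref{Fdef} yields
\[ F(\Th,\check g_\infty)(v_1,v_2,v_3,v_4)=\Th(Bv_1,v_2,v_3,v_4)+\Th(v_1,Bv_2,v_3,v_4)+\Th(v_1,v_2,Bv_3,v_4)+\Th(v_1,v_2,v_3,Bv_4). \]
Thus $F(\Th,\check g_\infty)$ is exactly the tensor furnished by Lemma \ref{GLlemma} with $A=B$. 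Since the Wilking cone $\cc(S)$ is $GL(V)$-invariant (built into its construction in Appendix \ref{wilking_appendix}) and $\Th$ lies on $\partial K$, the lemma produces $\ep_0>0$ with
\[ \Th\pm\ep\, F(\Th,\check g_\infty)\in K \qquad \text{for all } 0\leq\ep\leq\ep_0. \]

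I would then exploit the defining form of the Wilking cone. For $\om$ in the defining set $S$ of complex $2$-vectors with $\Th(\om,\bar\om)=0$, the real-linear functional $\Ups:T\mapsto T(\om,\bar\om)$ is nonnegative on $K=\cc(S)$ by definition and vanishes at $\Th$. Applying $\Ups$ to the two inclusions above gives $\pm\ep\,\Ups(F(\Th,\check g_\infty))\geq 0$, forcing
\[ F(\Th,\check g_\infty)(\om,\bar\om)=\Ups(F(\Th,\check g_\infty))=0, \]
which is the claimed identity.

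The main obstacle is the bookkeeping of the first step: verifying that \eqref{Fdef} really does organise as a symmetric sum of the Lemma \ref{GLlemma} type with a \emph{single} endomorphism $B$ acting on each of the four indices, rather than four different ones tied to each slot. Once this identification is in hand, the rest is the standard supporting-hyperplane argument for a closed convex cone. A minor compatibility point worth checking is that the complex-sesquilinear evaluation $T\mapsto T(\om,\bar\om)$ is real-valued on real algebraic curvature tensors, so that it genuinely defines a supporting functional on the real cone $\cc(S)$, which is immediate from $\overline{T(\om,\bar\om)}=T(\bar\om,\om)=T(\om,\bar\om)$ using the symmetries of $T$.
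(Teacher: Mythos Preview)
Your proposal is correct and follows exactly the paper's route: the paper simply remarks that $F(\Th,\check g_\infty)$ is a particular instance of the tensor in Lemma~\ref{GLlemma} and therefore points along the boundary of $K$, from which the corollary is immediate. One small imprecision worth tidying: Lemma~\ref{GLlemma} does not literally furnish $\ep_0>0$ with $\Th\pm\ep F\in K$; its conclusion is stated in the supporting-functional sense of Definition~\ref{Kdef}, but this is exactly what you need, since for $\om\in S$ with $\Th(\om,\bar\om)=0$ the functional $T\mapsto -T(\om,\bar\om)$ is a supporting functional at $\Th$ and hence annihilates $F$ (equivalently, differentiate the curve $\Th_\al\in K$ from the proof of Lemma~\ref{GLlemma} and use that $\al\mapsto\Th_\al(\om,\bar\om)\geq 0$ has a minimum at $\al=0$).
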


\subsection{Uniform control on $\Rop_\infty$}

As in Section \ref{CESasymp}, we make the change of
coordinates $\al=\ln t$, for $\al\in (-\infty,\ln T]$.
In these coordinates,
\beq
\label{Rmalpha}
\begin{aligned}
{\stackrel{\infty}{R}}_{ijkl} &  = e^{-\al} R_{ijkl} \\
{\stackrel{\infty}{R}}_{i\al j\al} & =  e^{\al}\(\lap R_{ij} + 2 R_{ikjl} R^{kl}   -R_i^k R_{jk} + \frac{R_{ij}}{2 t} - \frac1{2} \nabla_i \nabla_j R\)\\
{\stackrel{\infty}{R}}_{ij\al k} & = \nabla_i R_{jk} - \nabla_j R_{ik}
\end{aligned} 
\eeq

Define a metric $h$ on $\m\times (-\infty,\ln T]$ by
\beq
\label{hdef}
h:=g(e^\al)+d\al^2.
\eeq

Following the perturbation of the Harnack quadratic performed by Hamilton (cf. \cite{hamharnack} p. 239) and Lemma 7 in   \cite{brendleharnack} by Brendle, we will modify the algebraic curvature tensor $\Rop_\infty$  to another algebraic curvature tensor, which lies in the interior of the relevant cone.

\begin{prop} (cf. \cite[Lemma 7]{brendleharnack}.)
\label{firstgo}
Let $V$ be a vector space of dimension $n + 1$, and $K \subset Sym^2_B(\Wedge^2 V^*)$ a $GL(V)$-invariant convex cone   
containing all weakly positive definite curvature forms.
Suppose $g(t)$ is a Ricci flow on a manifold $\m^n$ for 
$t\in [0,T]$ with derivatives up to order two of its curvature 
uniformly bounded. 
If $\Rop((\m,g(t))\times\R)\in K$ for all $t  \in [0,T]$
then there exists $C_0<\infty$ such that  
$$\Rop_\infty + \eta h\odot h \in interior(K)$$
throughout $\check\m$ for $\eta\geq C_0$,
where $\odot$ refers to the Kulkarni-Nomizu product. 
\end{prop}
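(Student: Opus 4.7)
The plan is to exploit the hypothesis $\Rop((\m,g(t))\times\R)\in K$ by rewriting $\Rop_\infty$ as the sum of a piece that already lies in $K$ plus a bounded remainder which $\eta h\odot h$ can dominate. Specifically, working in the coordinate $\al=\ln t$ so that the explicit formulae \eqref{Rmalpha} apply, and using an $h$-orthonormal frame adapted to $h=g(e^\al)+d\al^2$, I would consider the pointwise decomposition
$$\Rop_\infty \;=\; \tfrac{1}{t}\,\Rop\bigl((\m,g(t))\times\R\bigr) \;+\; E,$$
in which the first summand accounts entirely for the spatial-only components $e^{-\al}R_{ijkl}$ of $\Rop_\infty$ (since the product curvature vanishes in the $\al$-direction), and the error $E$ gathers the remaining mixed and time-time pieces
$$E_{ij\al k}=\nabla_iR_{jk}-\nabla_jR_{ik}, \qquad E_{i\al j\al}=e^\al\!\left(\lap R_{ij}+2R_{ikjl}R^{kl}-R_i^kR_{jk}-\tfrac12\nabla_i\nabla_jR\right)+\tfrac12 R_{ij}.$$

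The key step is then a uniform bound on $|E|_h$ throughout $\check\m$. Since $e^\al=t\leq T$ and since by hypothesis the Riemann tensor of $g(t)$ together with its first two covariant derivatives is uniformly bounded on $\m\times[0,T]$, every entry of $E$ in an $h$-orthonormal frame is controlled by a finite constant $C$ depending only on these bounds. Separately, in any $h$-orthonormal frame the Kulkarni--Nomizu tensor $h\odot h$ acts on $\Wedge^2 T^*\check\m$ as a positive multiple of the identity; concretely, $(h\odot h)(\om,\om)\geq c|\om|_h^2$ for every $\om\in\Wedge^2T^*\check\m$, with $c>0$ universal. Hence for $\eta\geq C_0$ with $C_0$ chosen large enough in terms of $C$ and $c$, the curvature form $E+\eta h\odot h$ is strictly positive definite on $\Wedge^2T^*\check\m$, uniformly over $\check\m$.

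To conclude, I would observe that strictly positive definite curvature forms lie in the interior of the cone of weakly positive definite ones, which by hypothesis is contained in $K$; hence $E+\eta h\odot h\in\interior(K)$. On the other hand, homogeneity of the cone applied pointwise to $\Rop((\m,g(t))\times\R)\in K$ yields $\tfrac{1}{t}\Rop((\m,g(t))\times\R)\in K$ on all of $\check\m$. Since $K$ is a closed convex cone, adding an element of $K$ to an element of $\interior(K)$ produces another element of $\interior(K)$ (an elementary check: an open ball around the interior point translated by the $K$-element remains in $K$). Combining,
$$\Rop_\infty+\eta h\odot h \;=\; \tfrac{1}{t}\Rop((\m,g(t))\times\R) \;+\; (E+\eta h\odot h) \;\in\; \interior(K)$$
throughout $\check\m$, as required.

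The principal obstacle is only an apparent one: the spatial components of $\Rop_\infty$ blow up like $e^{-\al}$ as $t\downto 0$, so one might initially worry that no finite $\eta$ can enforce strict positivity of $\Rop_\infty+\eta h\odot h$ with respect to $h$. The decomposition above sidesteps this by absorbing the unbounded piece into a tensor that already lies in $K$, leaving only the uniformly bounded tail $E$ for $\eta h\odot h$ to compensate.
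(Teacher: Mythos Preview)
Your proof is correct and follows essentially the same approach as the paper: decompose $\Rop_\infty$ as $\tfrac{1}{t}\Rop((\m,g(t))\times\R)$ plus a remainder, observe that the first term lies in $K$ by scaling, bound the remainder with respect to $h$ using \eqref{Rmalpha} and the assumed curvature bounds, and then absorb it with $\eta h\odot h$ to land in the positive-definite cone and hence in $\interior(K)$. Your write-up is simply more explicit about the form of the error $E$ and the convexity step $K+\interior(K)\subset\interior(K)$.
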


\begin{proof}
By hypothesis, $\Rop((\m,g(t))\times\R)\in K$ 
and therefore
$\frac{1}{t}\Rop((\m,g(t))\times\R)\in K$ for $t > 0$.
But 
$$\Rop_\infty-\frac{1}{t}\Rop((\m,g(t))\times\R)
=\Rop_\infty-e^{-\al}\Rop((\m,g(t))\times\R)$$ 
is bounded
when measured with respect to $h$ (for $\al\leq \ln T$)
by inspection of \eqref{Rmalpha}.

Therefore for large enough $\eta$, 
$\Rop_\infty-\frac{1}{t}\Rop((\m,g(t))\times\R)+ \eta h\odot h$
is positive definite (viewed as a bilinear form as usual)
and is thus in the interior of the cone $K$ by hypothesis.
The result follows.
\end{proof}

The above proposition could be considered a first step
to obtaining the result with $\eta=0$ in particular
cases -- i.e. obtaining a Harnack inequality. Moreover, it allows us to compare the quadratic quantities $Q$ and $F$, given by \eqref{Qdef} and \eqref{Fdef} respectively, at   $\Rop_\infty$
and at the perturbed $\Rop_\infty+\eta h\odot h$ whenever the latter hits the boundary of $K$ as follows:
\begin{cor} (cf. Brendle \cite{brendleharnack}.)
\label{quadcor}
In the setting of Proposition \ref{firstgo}, there
exists $M_2\leq \infty$ such that the following is true.
If at some point in $\check\m$ we have
$\Rop_\infty+\eta h\odot h \in \partial K$ for some $\eta>0$,
then at that point we have
$$Q(\Rop_\infty+\eta h\odot h, \check g_\infty)-Q(\Rop_\infty, \check g_\infty)
\leq M_2\eta t\, h\odot h$$
and 
$$F(\Rop_\infty+\eta h\odot h, \check g_\infty)-F(\Rop_\infty, \check g_\infty)
\leq M_2\eta t\, h\odot h.$$
\end{cor}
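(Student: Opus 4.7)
The plan is to exploit the fact that both $Q(T,\check g_\infty)$ and $F(T,\check g_\infty)$ are quadratic in $T$, combined with a careful bookkeeping of how contractions with the degenerate metric $\check g_\infty$ scale in $t$. Setting $T_1=\Rop_\infty$ and $T_2=h\odot h$, and letting $\tilde Q$ and $\tilde F$ denote the symmetric bilinear maps obtained from $Q$ and $F$ by polarising the tensor slot, one writes
\[
Q(T_1+\eta T_2,\check g_\infty)-Q(T_1,\check g_\infty) = 2\eta\,\tilde Q(T_1,T_2;\check g_\infty)+\eta^2 Q(T_2,\check g_\infty),
\]
and similarly for $F$. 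It therefore suffices to bound each of the four resulting terms, as a symmetric bilinear form on $\Wedge^2 T\check\m$, by a constant multiple of $\eta t\,h\odot h$.

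The core of the argument is a direct scaling count carried out in the $(x,\alpha)$ coordinates of Section \ref{CESasymp}. There $h\odot h$ has $h$-norm of order $1$, whereas formula \eqref{Rmalpha}, together with the hypothesised bound on two derivatives of curvature, gives $|\Rop_\infty|_h=O(1/t)$, the worst case being attained by the fully spatial block ${\stackrel{\infty}{R}}_{ijkl}=e^{-\alpha}R_{ijkl}$. Crucially, $(\check g_\infty)^{-1}$ has nonzero components only in spatial slots, where it equals $tg^{ij}$, contributing exactly one factor of $t$ per contraction; since each of \eqref{Qdef} and \eqref{Fdef} involves two such contractions, the count produces $|\tilde Q(\Rop_\infty,h\odot h;\check g_\infty)|_h$ and $|\tilde F(\Rop_\infty,h\odot h;\check g_\infty)|_h$ of order $t$, and $|Q(h\odot h,\check g_\infty)|_h$ and $|F(h\odot h,\check g_\infty)|_h$ of order $t^2$. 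Because $h$ is uniformly comparable on $\check\m$ to the fixed metric $g(0)+d\alpha^2$, the bilinear form $h\odot h$ is uniformly positive on $\Wedge^2T\check\m$, so any algebraic curvature tensor of $h$-norm at most $K$ is dominated in the quadratic-form order by $cK\,h\odot h$ for a dimensional constant $c$. This converts the above $h$-norm bounds into pointwise bilinear-form inequalities of the required shape.

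To absorb the $\eta^2 t^2$ contribution we appeal to Proposition \ref{firstgo}: the assumption $\Rop_\infty+\eta h\odot h\in\partial K$ forces $\eta < C_0$, since otherwise the decomposition $\Rop_\infty+\eta h\odot h=(\Rop_\infty+C_0\,h\odot h)+(\eta-C_0)h\odot h$ would exhibit it as the sum of an interior point of $K$ and a weakly positive definite curvature form lying in $K$ by hypothesis on $K$, contradicting the boundary condition. Consequently $\eta^2 t^2\leq C_0 T\cdot\eta t$, and combining with the cross-term estimate yields the first inequality with $M_2$ depending on $C_0$, $T$ and the $C^2$ curvature bound; the argument for $F$ is identical. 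I expect the main subtlety to be the scaling count itself: in particular one must verify that the mixed components ${\stackrel{\infty}{R}}_{i\alpha j\alpha}$ are genuinely bounded rather than merely $O(1/t)$, a fact relying on the cancellation between the prefactor $e^\alpha$ and the summand $R_{ij}/(2t)$ inside the bracket in \eqref{Rmalpha}.
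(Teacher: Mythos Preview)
Your proposal is correct and follows the same two-ingredient approach as the paper: use Proposition~\ref{firstgo} to force $\eta<C_0$ whenever $\Rop_\infty+\eta\,h\odot h\in\partial K$, and then exploit that $Q$ and $F$ are quadratic in their first argument. The paper's own proof is literally those two sentences; you have supplied the scaling count (two factors of $t$ from $\check g_\infty$, at worst one factor of $1/t$ from $\Rop_\infty$, $h\odot h$ of order one) that makes the $O(\eta t)$ bound explicit, which is exactly the detail the paper leaves to the reader.
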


The corollary follows from Proposition \ref{firstgo} because
that proposition ensures that $\eta$ cannot be too large
(or $\Rop_\infty+\eta h\odot h$ could not be on the boundary of $K$)
and we may then use the fact that $Q$ and $F$ are both quadratic.

\subsection{Perturbing $\Rop_\infty$}

As is typical with a maximum principle proof, we must
consider the evolution of a slightly perturbed version of
$\Rop_\infty$ in order to proceed via a contradiction argument.
(In this respect we also follow \cite{hamharnack} and
\cite{brendleharnack}.)

First we note that $h$ defined in \eqref{hdef} satisfies
$$\lie_{t\pl{}{t}}h=\lie_{\pl{}{\al}}h=t\pl{g}{t}=-2t\Ric(g(t)),$$ 
and a short computation yields 
\beq
\label{hfirstderiv}
\stackrel{\hspace*{-0.1cm} \infty}{\grad_i}h
=\(tR_{ij}+\half g_{ij}\)\(d\al\tensor dx^j + dx^j\tensor d\al\),
\eeq
and using \eqref{laptdef},
$$\lap_t h = \frac{t}{2}\(dR\tensor d\al+ d\al\tensor dR\)
+\half|2t\Ric+ g|^2 d\al\tensor d\al.$$
Consequently, if we have a uniform curvature bound,
and a uniform bound on the first derivative of $R$ (e.g. in the setting
of Proposition \ref{firstgo}) then
\beq
\label{hcontrol}
|\lie_{t\pl{}{t}}h|_h\leq Ct;\qquad |\lap_t h|_h\leq C.
\eeq
We now require a `defining function' $\vph$ as in the work
of Hamilton and Brendle.

\begin{lemma} (Hamilton \cite{hamharnack} and
Brendle \cite{brendleharnack}.)
Let $(\m^n, g(t))$, $t \in [0,T]$, with $T < \infty$, be a complete solution of the Ricci flow such that $|\Rop(g(t))|$ and $|\nabla \Ric(g(t))|$ are bounded on $\m \times [0,T]$ by 
$C_0 \geq 0$. Then there exists a function 
$\varphi \in C^\infty(\m)$ and 
$C = C(n, C_0, T)<\infty$
with the properties:
\begin{compactenum}
\item
$\varphi(x) \to \infty$ as $x \to \infty$.
\item
$\varphi(x) \geq 1$ for all $x \in \mathcal M$.
\item
$\sup_{\mathcal M \times [0, T]} |\nabla \varphi|_{g(t)} \leq C$.
\item
$\sup_{\mathcal M \times [0, T]} |\Delta_{g(t)} \varphi| \leq C$.
\end{compactenum}
\end{lemma}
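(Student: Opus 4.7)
The plan is to take $\varphi$ to be an appropriate smoothing of the $g(0)$-distance from a basepoint, and then transfer the required bounds from time $0$ to every $t\in[0,T]$ using the curvature bounds on the flow. Fix a basepoint $p\in\m$ and set $\rho(x):=d_{g(0)}(p,x)+2$, so that $\rho\geq 2$ and $\rho(x)\to\infty$ as $x\to\infty$.

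The key analytic input is the construction of $\tilde\varphi\in C^\infty(\m)$ with $|\tilde\varphi-\rho|\leq 1$, $|\nabla\tilde\varphi|_{g(0)}\leq C$ and $|\nabla^2_{g(0)}\tilde\varphi|_{g(0)}\leq C$, for some $C=C(n,C_0)$. The hypothesis $|\Rop(g(0))|\leq C_0$ yields Hessian-comparison bounds on $\rho$ (in the barrier sense) once $\rho\geq 1$, together with a uniform positive lower bound on the harmonic radius at time $0$. One can then obtain $\tilde\varphi$ by Greene--Wu regularization in harmonic charts, or equivalently by mollifying $\rho$ against a bump function supported in a ball of radius $c(n,C_0)$ in normal coordinates. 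Setting $\varphi:=\tilde\varphi$ then gives (i) and (ii). I expect this smoothing step to be the main obstacle, since one must promote a barrier Hessian bound into a pointwise smooth bound, and the bounded curvature hypothesis is used crucially to obtain a uniform scale on which the mollification is well-defined.

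Once $\varphi$ has been constructed, items (iii) and (iv) at arbitrary $t\in[0,T]$ follow from the $t=0$ bounds together with two standard Ricci-flow estimates. First, $|\Ric|\leq C_0$ gives $e^{-2C_0T}g(0)\leq g(t)\leq e^{2C_0T}g(0)$ on $\m\times[0,T]$, and hence $|\nabla\varphi|_{g(t)}^2=g(t)^{ij}\partial_i\varphi\partial_j\varphi\leq e^{2C_0T}|\nabla\varphi|^2_{g(0)}$, establishing (iii). Second, writing
$\Delta_{g(t)}\varphi = g(t)^{ij}\nabla^{g(0)}_i\nabla^{g(0)}_j\varphi + g(t)^{ij}\bigl(\Gamma^k_{ij}(g(0))-\Gamma^k_{ij}(g(t))\bigr)\partial_k\varphi$,
the first term is controlled by the Hessian bound on $\varphi$ together with the metric equivalence, and for the second one integrates the Ricci-flow evolution
$\partial_t\Gamma^k_{ij}=-g^{kl}\bigl(\nabla_iR_{jl}+\nabla_jR_{il}-\nabla_lR_{ij}\bigr)$
from $0$ to $t$ and invokes the hypothesis $|\nabla\Ric|\leq C_0$ to bound $|\Gamma(g(t))-\Gamma(g(0))|$ by a constant depending on $n,C_0,T$; combined with the gradient bound on $\varphi$ this yields (iv).
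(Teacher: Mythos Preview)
The paper does not prove this lemma; it simply states it with attribution to Hamilton \cite{hamharnack} and Brendle \cite{brendleharnack} and proceeds to use $\varphi$. So there is no in-paper proof to compare against. Your outline is essentially the standard construction underlying those references: take a smooth approximation of the $g(0)$-distance function with controlled Hessian (via Greene--Wu mollification on a manifold with bounded curvature, or equivalently Shi's construction), and then transfer the gradient and Laplacian bounds to all $t\in[0,T]$ using the metric equivalence $e^{-2C_0T}g(0)\leq g(t)\leq e^{2C_0T}g(0)$ together with the integrated evolution of the Christoffel symbols, which is controlled precisely by the hypothesis $|\nabla\Ric|\leq C_0$.

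Your time-transfer argument for (iii) and (iv) is clean and correct; in particular, this is exactly where the assumption on $|\nabla\Ric|$ enters. The one place where you are honest about sketching is the smoothing step: Hessian comparison only gives barrier \emph{upper} bounds on $\nabla^2\rho$, not two-sided pointwise bounds, so mollification in normal coordinates alone does not immediately yield $|\nabla^2\tilde\varphi|\leq C$. The way this is actually done (Greene--Wu, or as in Shi's paper) is to exploit the uniform injectivity-radius lower bound (from bounded curvature plus completeness, after possibly passing to a local cover) and average in geodesic balls, where the curvature bound controls the Jacobian and the second derivatives of $\exp$. This is well-documented in the literature, so your deferral is reasonable, but it is worth being aware that the two-sided Hessian bound is not a direct consequence of Hessian comparison.
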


Using $\vph$, we define 
\beq
\label{Upsdef}
\Ups:=\vph\, h\odot h,
\eeq
and by \eqref{hcontrol} and \eqref{hfirstderiv}, 
$$|\lie_{t\pl{}{t}}\Ups|_h\leq Ct;\qquad |\lap_t \Ups|_h\leq C.$$
Therefore there exists $M_1>0$ such that
\beq
\label{Upscontrol}
\lie_{t\pl{}{t}}\Ups-t\lap_t \Ups+tM_1\Ups\geq 0.
\eeq

\begin{lemma}
\label{lambdalemma}
In the setting of Proposition \ref{firstgo}, there exists
$\la\in (0,\infty)$ such that for all $\ep>0$, 
if we define a tensor $\Th$ on $\check\m$ by
$$\Th:=t\Rop_\infty+\ep e^{\la t}\Ups$$
then 
at any point $(x,t)\in\check\m$ 
where $\Th$ is on the boundary of $K$, we have 
\beq
\label{Theq}
\lie_{t\pl{}{t}}\Th-t\lap_t \Th >
\frac{1}{t}Q(\Th, \check g_\infty)+\frac{1}{t}F(\Th, \check g_\infty).
\eeq
Moreover, there exist $\de>0$ and $\Om\subset\subset\m$ 
so that at any point $(x,t)\in\check\m$ with $t\leq \de$ 
or $x\notin\Om$, we have $\Th(x,t)\in interior(K)$.
\end{lemma}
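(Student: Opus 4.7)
The plan is to choose $\la$ using only the constants $M_1$ from \eqref{Upscontrol} and $M_2$ from Corollary \ref{quadcor}, namely any $\la > M_1 + 2M_2$; this choice depends on the background geometry but not on $\ep$. The first assertion then reduces to a direct computation that applies the master formula \eqref{mainformula} to the $t\Rop_\infty$ piece of $\Th$, the evolution inequality \eqref{Upscontrol} to the $\ep e^{\la t}\Ups$ piece, and Corollary \ref{quadcor} to control $(F+Q)(\Th)$ in terms of $(F+Q)(\Rop_\infty)$. The second assertion will be a continuity/growth argument exploiting that $\Ups$ is strictly positive definite and that $\vph \to \infty$.

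For the first assertion, first I would expand
$$(\lie_{t\pl{}{t}} - t\lap_t)\Th = (\lie_{t\pl{}{t}} - t\lap_t)(t\Rop_\infty) + \ep(\lie_{t\pl{}{t}} - t\lap_t)(e^{\la t}\Ups)$$
via the Leibniz rule for $\lie_X$ with $X = t\pl{}{t}$ (so $Xt = t$ and $Xe^{\la t} = \la t e^{\la t}$) together with Lemma \ref{laptlem} (which makes $\lap_t$ commute with scalar functions of $t$). The first summand becomes $t\Rop_\infty + t[\lie_X\Rop_\infty - t\lap_t\Rop_\infty]$, and substituting \eqref{mainformula} cancels the lone $t\Rop_\infty$ against the $-\Rop_\infty$ on its right-hand side, leaving $t[F(\Rop_\infty,\check g_\infty) + Q(\Rop_\infty,\check g_\infty)]$. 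The second summand becomes $\ep e^{\la t}[\la t\Ups + \lie_X\Ups - t\lap_t\Ups]$, which by \eqref{Upscontrol} is $(\la - M_1)t\ep e^{\la t}\Ups$ plus a weakly positive definite correction. Meanwhile, because $K$ is a cone, the condition $\Th\in\partial K$ is equivalent to $\Th/t = \Rop_\infty + \eta\, h\odot h \in \partial K$ with $\eta := \ep e^{\la t}\vph(x)/t > 0$; bilinearity of $F$ and $Q$ gives $(F+Q)(\Th) = t^2(F+Q)(\Th/t)$, so applying Corollary \ref{quadcor} and dividing by $t$ yields
$$\tfrac{1}{t}(F+Q)(\Th) \leq t(F+Q)(\Rop_\infty,\check g_\infty) + 2M_2\,t\,\ep\,e^{\la t}\,\Ups.$$
Subtracting the two expressions cancels the $t(F+Q)(\Rop_\infty)$ contribution and leaves
$$(\lie_{t\pl{}{t}} - t\lap_t)\Th - \tfrac{1}{t}(F+Q)(\Th) \geq (\la - M_1 - 2M_2)\,t\,\ep\,e^{\la t}\,\Ups,$$
which for $\la > M_1 + 2M_2$ is a strictly positive multiple of $\Ups = \vph\, h\odot h$. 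Since $h\odot h$ is strictly positive definite as a curvature form on $\Wedge^2$, and $K$ contains every weakly positive definite such form, this difference lies in the interior of $K$, which is the content of \eqref{Theq}.

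For the last clause, the proof of Proposition \ref{firstgo} supplies the uniform bound $\|t\Rop_\infty\|_h \leq C$ on $\check\m$ (since $t\Rop_\infty = \Rop((\m,g(t))\times\R) + O(t)_h$ and the latter is uniformly bounded in $h$-norm). Evaluating $\Th$ on any $\om\in\Wedge^2 T\check\m$ and using $(h\odot h)(\om,\om) = 2|\om|_h^2$ yields
$$\Th(\om,\om) \geq \bigl(2\ep e^{\la t}\vph(x) - C\bigr)|\om|_h^2,$$
so setting $\Om := \{x\in\m \,:\, \vph(x) \leq C/\ep + 1\}$, which is compact by the growth property of $\vph$, makes $\Th(x,t)$ strictly positive definite (hence in interior$(K)$) for every $x\notin\Om$ and every $t\in(0,T]$. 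For $x\in\Om$ one uses continuity: as $t\downto 0$, $t\Rop_\infty \to \Rop((\m,g(0))\times\R)\in K$ while $\ep e^{\la t}\Ups \to \ep\vph\,(g(0){+}d\al^2)\odot(g(0){+}d\al^2)$, which sits strictly in the interior of $K$, so the limit tensor lies in interior$(K)$; compactness of $\Om$ then supplies a uniform $\de>0$ such that $\Th(x,t)\in$ interior$(K)$ whenever $x\in\Om$ and $t\leq\de$. The main obstacle throughout is quantitatively tracking the interior-of-$K$ margin afforded by the strictly positive definite tensor $\Ups$, so that it really does dominate the three error sources -- \eqref{Upscontrol}, Corollary \ref{quadcor}, and the $O(t)_h$ gap between $t\Rop_\infty$ and $\Rop((\m,g(t))\times\R)$ -- but once these constants are named, both conclusions follow from the algebra above.
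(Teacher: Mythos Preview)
Your argument for the main inequality \eqref{Theq} is correct and essentially identical to the paper's: you expand the two pieces of $\Th$, feed \eqref{mainformula} into the $t\Rop_\infty$ part, apply \eqref{Upscontrol} to the $\ep e^{\la t}\Ups$ part, rewrite the quadratic terms via Corollary \ref{quadcor} (after scaling by $t$), and conclude by choosing $\la>M_1+2M_2$. One minor wording slip: \eqref{Theq} asserts that the difference is strictly positive definite, not merely that it lies in $\interior(K)$; your computation does give the stronger statement, so this is only a phrasing issue.

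For the last clause you take a somewhat different route. The paper simply writes $\Th=t\bigl(\Rop_\infty+\frac{\ep e^{\la t}\vph}{t}\,h\odot h\bigr)$ and applies Proposition \ref{firstgo} directly: the coefficient $\eta=\ep e^{\la t}\vph/t$ exceeds $C_0$ whenever $t\leq \ep/C_0$ or $\vph(x)\geq C_0T/\ep$, and this immediately gives both $\de$ and $\Om$. You instead (i) prove strict positive definiteness of $\Th$ outside $\Om$ from the uniform $h$-bound on $t\Rop_\infty$, and (ii) for small $t$ argue by continuity that $t\Rop_\infty\to\Rop((\m,g(0))\times\R)\in K$ while the $\Ups$ term stays strictly in the interior, then invoke compactness of $\Om$. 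This works, but it is more circuitous: Proposition \ref{firstgo} was set up precisely to handle both regimes at once, and your continuity argument requires implicitly extending everything to $t=0$ in $(x,\al)$-coordinates and checking uniformity over $\bar\Om$. The paper's approach is shorter and makes the dependence of $\de$ and $\Om$ on $\ep$ explicit.
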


Note that for a bilinear form $B$ on a vector space $V$,
we write $B>0$ if $B(v,v)>0$ for all \emph{nonzero} $v\in V$.

\begin{proof}
For initially arbitrary $\la>0$ and $\ep>0$, take the
corresponding $\Th$
and suppose that $\Th(x,t)\in\partial K$.
By Corollary \ref{quadcor}, we have
\beq
\label{Qconseq}
\frac{1}{t}Q(\Th, \check g_\infty)-t Q(\Rop_\infty, \check g_\infty)
\leq M_2 \ep t e^{\la t} \Ups,
\eeq
and the same for $F$.
By computing
$$\lie_{t\pl{}{t}}\Th=t\Rop_\infty+t\lie_{t\pl{}{t}}\Rop_\infty
+\la t\ep e^{\la t}\Ups+ \ep e^{\la t}\lie_{t\pl{}{t}}\Ups,$$
and (using Lemma \ref{laptlem})
$$\lap_t \Th=t\lap_t\Rop_\infty + \ep e^{\la t}\lap_t\Ups$$
we find, using \eqref{mainformula}, \eqref{Upscontrol}
and \eqref{Qconseq}, that
\begin{align*}
\lie_{t\pl{}{t}}\Th-t\lap_t \Th&=
\la t\ep e^{\la t}\Ups+ \ep e^{\la t}\(\lie_{t\pl{}{t}}\Ups
-t\lap_t\Ups\)
+t F(\Rop_\infty,\check g_\infty) + tQ(\Rop_\infty,\check g_\infty)\\
&\geq \ep t e^{\la t}\Ups \(\la - M_1 -2M_2\)
+\frac{1}{t}Q(\Th, \check g_\infty)+\frac{1}{t}F(\Th, \check g_\infty)
\end{align*}
Therefore, if $\la$ is chosen large enough, we have
\eqref{Theq} as desired.

Finally, since
$$\Th=t\(\Rop_\infty+\frac{\ep e^{\la t}\vph}{t}h\odot h\),$$
by Proposition \ref{firstgo} (taking $C_0$ 
from that proposition) if $t\leq \frac{\ep}{C_0}=:\de$ or
$\vph(x)\geq \frac{C_0 T}{\ep}$
(i.e. $x\notin \vph^{-1}\([1,\frac{C_0 T}{\ep})\)=:\Om$)
then $\Th(x,t)\in interior(K)$, and we deduce the final part of the lemma.
\end{proof}

Notice that the last claim of the previous lemma effectively gives us the  initial condition for the
maximum principle argument we will use to prove Theorem \ref{harnackthm}; in fact, it ensures that $\Th$ (which should be regarded as a slight  perturbation of $\Rop_\infty$)
is in the cone for small enough $t$. Furthermore, it says nothing
bad can happen at spatial infinity, i.e. if we were to
fall out of the cone, it would happen within some compact region.

\subsection{Proof of Theorem \ref{harnackthm}}

The Ricci flow of the theorem has uniformly bounded curvature
by Remark \ref{traceresult}, and so by Shi's derivative estimates,
for any $\de>0$ we may assume that all derivatives of the
curvature are bounded for $t\in [\de,T]$ (with bounds
depending on $\de$).
Therefore, it suffices to prove the result assuming that
the Ricci flow exists on a closed time interval $[0,T]$ and
that all derivatives of the curvature are bounded on $[0,T]$,
since we could apply that result to $g(\de+t)$ for $t\in [0,T-\de]$
and then let $\de\downto 0$.

We will apply the results we have established so far in
Section \ref{proofsection} in the case that 
$K$ is the closed cone $\cc(S_k)$ (as discussed in Appendix
\ref{wilking_appendix}) on an $(n+1)$-dimensional vector space.
We begin by choosing $\la$ as in Lemma \ref{lambdalemma}.
For each $\ep>0$, we can then define the corresponding $\Th$.
Since our goal is to show that $\Rop_\infty \in \cc(S_k)$,
all we have to do is to show that $\Th$ lies in the cone 
$\cc(S_k)$ everywhere, for this arbitrarily small $\ep$.

By the second part of Lemma \ref{lambdalemma}, we know that 
$\Th$ lies in the interior of the 
cone $\cc(S_k)$ for small $t$ and
also,
outside $\Om$ for all time. 

Suppose, contrary to what we wish to establish,
that at some point $\Th$ fails
to lie in the cone.
Then we can pick a point $(x_1,t_1)$ with $t_1>0$ such that $\Th(x_1,t_1)$ is on
the boundary of the cone, and without loss of generality,
we may assume that $t_1$ is the least possible time at which we could
find such a point.

Since $\Th(x_1,t_1)$ is on the boundary of the cone $\cc(S_k)$, 
we can pick a nonzero element
$\om\in S_k\subset \Wedge^2 ({T}_{(x_1,t_1)}^\C\check\m)$ such
that $\Th (\om,\bar\om)=0$.
By Corollary \ref{Cor_L3.3}, 
since the cone is $GL(n + 1,\R)$-invariant, we have, at $(x_1,t_1)$,
$$F(\Th,\check g_\infty)(\om,\bar\om)=0$$
and by Theorem \ref{wilkingthm} 
we have 
$$Q(\Th,\check g_\infty)(\om,\bar\om)\geq 0,$$
so by Lemma \ref{lambdalemma} 
\beq
\label{oppositeineq}
\left(\lie_{t\pl{}{t}}\Th-t\lap_t \Th \right)(\om,\bar\om)> 0.
\eeq

We now extend $\om$ to a neighbourhood of $x$ in $\m$ by 
parallel translation along radial geodesics using
$\stackrel{\infty}{\grad}$, and then
extend $\om$ in time to make it constant in the sense
that 
\beq
\label{omconst}
\lie_{t\pl{}{t}}\om=0.
\eeq
By construction, 
\beq
\label{lapzero}
\lap_t\om=0
\eeq
at $(x_1,t_1)$.
Since parallel translation of $\om$ preserves its rank,
we have 
$rank(\om)\leq k$ 
in a neighbourhood of $(x_1,t_1)$.

Next we define a function $f$ in that neighbourhood of $(x_1,t_1)$ by 
$$f:=\Th (\om,\bar\om).$$
By definition of $f$, we have $f\geq 0$ near $x_1$
up to time $t_1$, and $f(x_1,t_1)=0$, and therefore at $(x_1,t_1)$
we have $\pl{f}{t}\leq 0$ and $\lap_{g(t)} f\geq 0$, so that
\beq
\label{oneway}
\pl{f}{t}-\lap_{g(t)} f\leq 0.
\eeq
On the other hand, we can compute at $(x_1,t_1)$
$$t\pl{f}{t}=\lie_{t\pl{}{t}}\Th (\om,\bar\om) + 
\Th(\lie_{t\pl{}{t}}\om,\bar\om)+ 
\Th(\om,\lie_{t\pl{}{t}}\bar\om)
=\lie_{t\pl{}{t}}\Th (\om,\bar\om),$$
by \eqref{omconst}, and 
$$\lap_t f = (\lap_t\Th)(\om,\bar\om)
+ \Th (\lap_t\om,\bar\om)
+ \Th (\om,\lap_t\bar\om)
= (\lap_t\Th)(\om,\bar\om)$$
by \eqref{lapzero}. 
Therefore by Lemma \ref{laptlem} and \eqref{oppositeineq}
we have at $(x_1,t_1)$
$$\pl{f}{t}-\lap_{g(t)} f
=\frac{1}{t}\(\lie_{t\pl{}{t}}\Th - t\lap_t\Th\)(\om,\bar\om)>0,$$
contradicting \eqref{oneway}, and completing the argument.

\appendix

\section{Appendix - computations}
\label{computations}

In this appendix we give the formulae for the connection
and curvature associated to the metric $\cg$ from
Theorem \ref{CESthm}. We also compute the Hessian
of $-\frac{N}{2 t}$, thus justifying the assertions
of Theorem \ref{CESthm}.

\begin{prop}
In the setting of Theorem \ref{CESthm}, if $\Ga^i_{jk}$
are the Christoffel symbols of $g(t)$ at some point
$x\in\m$, then the Christoffel symbols of $\check g$
at $(x, t)$ are given by
$$
\check \Ga^i_{jk}=\Ga^i_{jk};\quad
\check \Ga^i_{j0}= - \left({R^i}_j+\frac{{\de^i}_j}{2 t}\right);\quad
\check \Ga^i_{00}=-\half g^{ij}\pl{R}{x^j};\quad
$$
$$
\check \Ga^0_{jk}=\check g_{00}^{-1}
\left(\frac{R_{jk}}{t} + \frac{g_{jk}}{2 t^2}\right);\quad
\check \Ga^0_{i0}=\frac{1}{2 t} \check g_{00}^{-1}\pl{R}{x^i};\quad
\check \Ga^0_{00}=-\frac{3}{2t}+\frac{\check g_{00}^{-1}}{2t} 
\left[\frac{2R}{t}+R_t +\frac{n}{2t^2}\right]
$$
for $N$ sufficiently large that $\cg$ is a Riemannian metric.
\end{prop}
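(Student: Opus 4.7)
This is a pure computation from the definition of $\check g$, and the proof is simply to apply the Koszul/Christoffel formula
$$\check\Ga^k_{ij} = \tfrac12\check g^{kl}\bigl(\partial_i\check g_{jl}+\partial_j\check g_{il}-\partial_l\check g_{ij}\bigr)$$
to each of the six index patterns, using the Ricci flow equation \eqref{RFeq} to evaluate $t$-derivatives of the spatial block of $\check g$.

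\textbf{Step 1: the inverse metric.} The first move is to record that because $\check g_{0i}=0$, the metric is block diagonal with respect to the splitting into spatial and temporal indices. Hence $\check g^{00}=1/\check g_{00}$, $\check g^{0i}=0$, and $\check g^{ij}=t\,g^{ij}$ on the spatial block. For $N$ large enough, $\check g_{00}>0$, so everything is well defined. This block structure is the single most useful fact for the remainder: in every Koszul sum, the dummy index $l$ is forced to be purely spatial or purely temporal depending on the free index $k$.

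\textbf{Step 2: the six cases.} I would treat the purely spatial case $\check\Ga^i_{jk}$ first: the factors of $1/t$ in $\check g_{jk}$ and $t$ in $\check g^{kl}$ cancel, and since $\check g_{0l}=0$ only spatial $l$ contribute, yielding $\check\Ga^i_{jk}=\Ga^i_{jk}$. For $\check\Ga^i_{j0}$, only the $\partial_0\check g_{jl}$ term of the Koszul expression survives, and $\partial_t(g_{jl}/t) = -2R_{jl}/t - g_{jl}/t^2$ by \eqref{RFeq}; contracting with $tg^{il}$ gives $-R^i{}_j - \delta^i{}_j/(2t)$. For $\check\Ga^i_{00}$, only the $-\partial_l\check g_{00}$ term survives (the others carry $\check g_{0l}$), and the only spatially varying piece of $\check g_{00}$ is $R/t$, giving $-\tfrac12 g^{ij}\partial_j R$. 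The temporal Christoffel symbols $\check\Ga^0_{jk}$, $\check\Ga^0_{i0}$ are obtained in the same way: in each case exactly one of the three terms in the Koszul formula survives, and the identity $\partial_tg_{jk}=-2R_{jk}$ produces the advertised expressions after multiplication by $\tfrac12\check g^{00}$.

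\textbf{Step 3: the diagonal temporal symbol.} The only case requiring a small algebraic rearrangement is $\check\Ga^0_{00}=\tfrac12\check g^{00}\partial_t\check g_{00}$. Direct differentiation of $\check g_{00}=N/(2t^3)+R/t+n/(2t^2)$ gives
$$\partial_t\check g_{00}=-\tfrac{3N}{2t^4}+\tfrac{R_t}{t}-\tfrac{R}{t^2}-\tfrac{n}{t^3}.$$
The claimed form splits off a $-3/(2t)$ piece, and one checks directly that
$$\tfrac{1}{2t}\check g^{00}\Bigl[\tfrac{2R}{t}+R_t+\tfrac{n}{2t^2}\Bigr]-\tfrac{3}{2t}
=\tfrac{1}{2t}\check g^{00}\Bigl[\tfrac{2R}{t}+R_t+\tfrac{n}{2t^2}-3\check g_{00}\Bigr]$$
equals $\tfrac12\check g^{00}\partial_t\check g_{00}$ after substituting the expression for $\check g_{00}$ and cancelling terms. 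This is the only nontrivial rewriting in the whole proof.

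\textbf{Expected obstacle.} There is no real obstacle, only bookkeeping: one must be careful to use the Ricci flow equation \eqref{RFeq} consistently and to keep the sign of $\partial_t(1/t^k)$ correct. The mildly irritating step is the rearrangement of $\check\Ga^0_{00}$ into the given form, but it reduces to a one-line algebraic identity. All other Christoffel symbols follow from reading off a single surviving Koszul term, which is why the proposition is stated without proof and the calculation is relegated to the appendix.
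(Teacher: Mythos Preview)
Your proposal is correct and matches the paper's approach exactly: the paper simply states that the proposition follows from the Christoffel formula $\check\Ga^a_{bc}=\tfrac12\check g^{ad}(\partial_b\check g_{cd}+\partial_c\check g_{bd}-\partial_d\check g_{bc})$ together with the Ricci flow equation, which is precisely the computation you carry out case by case. Your verification of the $\check\Ga^0_{00}$ rearrangement is also correct.
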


This proposition follows from the definition of the
Christoffel symbols
$$\check\Ga^a_{bc}:=\half \check g^{ad}\left(
\pl{\check g_{cd}}{x^b}+\pl{\check g_{bd}}{x^c}-\pl{\check g_{bc}}{x^d}
\right),$$
where $a, b, c, d$ are arbitrary indices,
and the equation of Ricci flow. Using the
standard formula for the coefficients of the curvature tensor
$${{{\check R}_{abc}}}\/ ^d
=\pl{\check\Ga^d_{ac}}{x^b} -\pl{\check\Ga^d_{bc}}{x^a}
+\check\Ga^e_{ac}\check\Ga^d_{be}-\check\Ga^e_{bc}\check\Ga^d_{ae},$$
(where our sign convention is that $R_{1212}\geq 0$ on a 
positively curved manifold)
and the equation for the evolution of $\Ric$ under Ricci flow
$$\pl{R_{i}^j}{t}= \lap R_i^j + 2{R^j}_{nim}R^{mn},$$
(see for example \cite{RFnotes})
we can then verify the formulae of the following proposition.

\begin{prop}
In the setting of Theorem \ref{CESthm}, the coefficients of
the curvature tensor $\Rop$ of $\cg$ at $(x,t)$ are given by
\begin{eqnarray*} 
\check{R}_{ijkl} &  = &
\frac1{t} R_{ijkl} -  \frac{\check g^{00}}{2t^2} \left[
\(\Ric + \frac{g}{2t}\)
\odot
\(\Ric + \frac{g}{2t}\)
\right]_{ijkl}\\ 
&= &
\frac1{t} R_{ijkl} -  \frac{\check g^{00}}{t^2} \left[R_{ik} R_{jl}  - R_{il} R_{jk} + \frac1{2t} \(R_{ik} g_{jl}  + R_{jl} g_{ik} - R_{il} g_{jk} - R_{jk} g_{il}\) \right.
\\ & & \hspace*{2cm}  \left. \quad + \frac1{4 t^2}\(g_{ik} g_{jl} - g_{il} g_{jk}\)  \right] 
\\
\check R_{i0j0} & = &  \frac1{t}\(\lap R_{ij} + 2 R_{ikjl} R^{kl}   -R_i^k R_{jk} + \frac{R_{ij}}{2 t} - \frac1{2} \nabla_i \nabla_j R\)
\\ & & + \frac{\check g^{00}}{2 t^2}\left[\frac1{2} \nabla_i R \nabla_j R - \(\frac{2 R}{t} + \partial_t R + \frac{n}{2 t^2}\) \(R_{ij} + \frac{g_{ij}}{2t}\)\right]
\\
\check{R}_{ij0k} & = & \frac1{t}\(\nabla_i R_{jk} - \nabla_j R_{ik}\)  +\frac{\check g^{00}}{2t^2}\left\{ \(R_{ik} + \frac{g_{ik}}{2t}\) \nabla_j  R - \(R_{jk} + \frac{g_{jk}}{2t}\) \nabla_i R\right\}
 \end{eqnarray*} 
for $N$ sufficiently large that $\cg$ is a Riemannian metric.
\end{prop}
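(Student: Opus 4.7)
The proposition asserts a set of explicit formulas, so the task reduces to a direct substitution. The plan is to apply the standard formula
$$\check R_{abc}{}^d=\partial_b\check\Gamma^d_{ac}-\partial_a\check\Gamma^d_{bc}+\check\Gamma^e_{ac}\check\Gamma^d_{be}-\check\Gamma^e_{bc}\check\Gamma^d_{ae}$$
with the Christoffel symbols of the preceding proposition, then lower the fourth index with $\check g$ and split into the three kinds of components: purely spatial $\check R_{ijkl}$, the time-crossed $\check R_{i0j0}$, and the mixed $\check R_{ij0k}$. Since $\check g_{0i}=0$, lowering an index $d=0$ introduces a factor of $\check g_{00}$ (which reappears as $\check g^{00}$ after one rearranges so that the corresponding Christoffel quantities $\check\Gamma^0_{\bullet\bullet}$ shed their explicit $\check g_{00}^{-1}$ factors), while lowering a spatial $d$ only produces the factor $g_{dl}/t$.

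For $\check R_{ijkl}$ the purely spatial pieces of the substitution reproduce the intrinsic $R_{ijkl}$ of $g(t)$, which after lowering carries the factor $1/t$ displayed in the first formula. The remaining contributions are the quadratic Christoffel products that involve $\check\Gamma^0_{\bullet\bullet}$; plugging in $\check\Gamma^0_{jk}=\check g_{00}^{-1}(R_{jk}/t+g_{jk}/(2t^2))$ together with $\check\Gamma^i_{j0}=-(R^i{}_j+\delta^i{}_j/(2t))$, one sees after antisymmetrizing in $i\leftrightarrow j$ and $k\leftrightarrow l$ that the terms assemble precisely into the Kulkarni-Nomizu product $(\Ric+g/(2t))\odot(\Ric+g/(2t))$, with the announced prefactor $\check g^{00}/t^2$.

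For $\check R_{i0j0}$ the leading piece $\partial_0\check\Gamma^i_{j0}-\partial_j\check\Gamma^i_{00}$ produces a time derivative of $R^i{}_j$, the spatial Hessian $\tfrac12 g^{ik}\nabla_j\nabla_k R$, and lower order algebraic terms in $1/t$. Invoking the evolution equation $\partial_t R^i{}_j=\Delta R^i{}_j+2R^i{}_{kjl}R^{kl}$ quoted in the excerpt rewrites the time derivative in purely spatial terms, and after collecting pieces the first-line combination $\tfrac1t(\Delta R_{ij}+2R_{ikjl}R^{kl}-R_i^k R_{jk}+R_{ij}/(2t)-\tfrac12\nabla_i\nabla_j R)$ emerges. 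The remaining quadratic $\Gamma\Gamma$ contributions that carry an explicit $\check g^{00}$ collect into the second line of the displayed expression. For $\check R_{ij0k}$ the same machinery produces $\nabla_i R_{jk}-\nabla_j R_{ik}$ at leading order via the second Bianchi identity, and the $\check g^{00}$ residue lines up with the stated expression after substituting for $\check\Gamma^i_{j0}$ and $\check\Gamma^0_{jk}$.

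The main obstacle is purely organizational bookkeeping rather than any deep identity: one must carefully distinguish $\partial_t R_{ij}$ from $\partial_t R^i{}_j$ (which differ through $\partial_t g^{ij}=2R^{ij}$) when applying the Ricci evolution equation, and track the way powers of $\check g^{00}$ appear and cancel as different indices are lowered. Beyond the second Bianchi identity and the Ricci evolution equation stated immediately before the proposition, no further input is needed, and the three asserted formulas fall out once the terms are correctly grouped.
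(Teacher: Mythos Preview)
Your approach is correct and essentially identical to the paper's: the paper simply says to plug the Christoffel symbols from the preceding proposition into the standard formula $\check R_{abc}{}^d=\partial_b\check\Gamma^d_{ac}-\partial_a\check\Gamma^d_{bc}+\check\Gamma^e_{ac}\check\Gamma^d_{be}-\check\Gamma^e_{bc}\check\Gamma^d_{ae}$ and to invoke the Ricci-flow evolution $\partial_t R_i^{\ j}=\Delta R_i^{\ j}+2R^j{}_{nim}R^{mn}$, exactly as you describe. One small correction: the second Bianchi identity is not actually needed for $\check R_{ij0k}$ --- the combination $\nabla_i R_{jk}-\nabla_j R_{ik}$ drops out directly once the $\Gamma^l_{jm}$ terms convert partial derivatives to covariant ones (the contracted Bianchi identity only enters later, in the computation of the Ricci tensor).
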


By taking the appropriate trace to give Ricci curvatures,
and by using 
the formula for the coefficients of $\Hess_{\check g}(f)$
$$\check\grad^2_{ab}(f)=\plndd{f}{x^a}{x^b}-\pl{f}{x^c}\check\Ga^c_{ab},$$
the equation for the evolution of $R$
$$R_t - \lap R - 2|\Ric|^2=0,$$
(see for example \cite[Proposition 2.5.4]{RFnotes})
and the contracted second Bianchi identity
$$\grad_i {R^i}_j=\half \grad_j R,$$
we find:

\begin{prop}
\label{ric_hess_prop}
In the setting of Theorem \ref{CESthm}, the coefficients of
the Ricci tensor $\Ric$ of $\cg$ at $(x,t)$ are given by\begin{eqnarray*}
\check R_{ij} &   = &R_{ij} +  \frac{\cg^{00}}{t} \left[M_{ij}-\(R_{ij} + \frac{g_{ij}}{2 t}\) \(R + \frac{n}{2 t}\) + \(R_{i}^k + \frac{\delta_i^k}{2 t}\) \(R_{jk} + \frac{g_{jk}}{2 t}\) \right]
 \\ & & + \frac{(\cg^{00})^2}{2 t^2}\left[\frac1{2} \nabla_i R \nabla_j R - \(\frac{2 R}{t} + \partial_t R + \frac{n}{2 t^2}\) \(R_{ij} + \frac{g_{ij}}{2t}\)\right]
\\
\check R_{i0}  & = & \frac{\nabla_i R}{2} +\frac{\check g^{00}}{2t}\left[ \(R_{i}^j + \frac{\delta_{i}^j}{2t}\) \nabla_j  R - \(R + \frac{n}{2t}\) \nabla_i R\right]
\\
\check R_{00} & = & \frac{\partial_t R}{2}   + \frac{R}{2 t}
+ \frac{\check g^{00}}{2 t}\left[\frac1{2} |\nabla R|^2 - \(\frac{2 R}{t} + \partial_t R + \frac{n}{2 t^2}\) \(R + \frac{n}{2t}\)\right]
\end{eqnarray*}
where
$$M_{ij}:=\lap R_{ij} + 2 R_{ikjl} R^{kl}   -R_i^k R_{jk} + \frac{R_{ij}}{2 t} - \frac1{2} \nabla_i \nabla_j R.$$
Moreover, 
we have
\begin{eqnarray*}
\check \nabla_{ij}^2 \(- \frac{N}{2 t}\) &= & - \(R_{ij} + \frac1{2t} g_{ij}\) + \cg^{00} \(\frac{R}{t} + \frac{n}{2 t^2}\)\(R_{ij} + \frac1{2t} g_{ij}\)
\\
\check \nabla_{i0}^2 \(- \frac{N}{2 t}\) &= &  - \frac{\nabla_i R}{2} + \frac{\cg^{00}}{2} \nabla_i R \(\frac{R}{t} + \frac{n}{2 t^2}\)
\\
\check \nabla_{00}^2 \(- \frac{N}{2 t}\) &= &  - \frac{1}{2} \check g_{00}  -\frac{\partial_t R}{2} - \frac{R}{2t}  + \frac{\check g^{00}}{2 t}\(R + \frac{n}{2 t}\)\(\partial_t R  + \frac{n}{2t^2} + \frac{2 R}{t}\)   
\end{eqnarray*}
\end{prop}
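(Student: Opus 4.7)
The plan is to derive all six formulae by direct tensor calculation, using the Riemann tensor components of $\cg$ supplied by the preceding proposition, the Christoffel symbols from the one before that, and two standard identities on the base Ricci flow: the scalar-curvature evolution equation $R_t = \lap R + 2|\Ric|^2$ and the contracted second Bianchi identity $\grad_i R^i_j = \half\grad_j R$. Since $\cg_{0i}=0$, the inverse metric is block diagonal with $\cg^{ij} = t g^{ij}$ and $\cg^{00} = 1/\cg_{00}$, so every trace is explicit.

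For the Ricci coefficients I would contract block by block. Writing $\check R_{ij} = \cg^{kl}\check R_{kilj} + \cg^{00}\check R_{0i0j}$, the $tg^{kl}$-trace of the leading $R_{ijkl}/t$ term recovers the base Ricci $R_{ij}$, and tracing the Kulkarni--Nomizu correction $-(\cg^{00}/(2t^2))(\Ric + g/(2t))\odot(\Ric + g/(2t))$ contributes (via the identity $\operatorname{tr}(A\odot A) = 2(\operatorname{tr}A)A - 2A^2$ for symmetric $A$) precisely $-(R+n/(2t))(R_{ij}+g_{ij}/(2t)) + (R_i^k+\de_i^k/(2t))(R_{jk}+g_{jk}/(2t))$, scaled by $\cg^{00}/t$. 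The remaining contribution $\cg^{00}\check R_{0i0j}$ is already proportional to $\cg^{00}$, so it supplies both the $(\cg^{00}/t)M_{ij}$ term and the $(\cg^{00})^2/(2t^2)$ correction verbatim. The mixed block $\check R_{i0} = \cg^{jk}\check R_{ji0k}$ and the pure-time $\check R_{00} = \cg^{ij}\check R_{i0j0}$ are handled analogously: the leading $\half\grad_i R$ in $\check R_{i0}$ comes from Bianchi applied to $g^{jk}(\grad_j R_{ik} - \grad_i R_{jk})$, and the leading $\half\partial_t R + R/(2t)$ in $\check R_{00}$ emerges from $g^{ij}M_{ij} = \half\lap R + |\Ric|^2 + R/(2t)$ together with the evolution equation for $R$.

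For the Hessian of $f = -N/(2t)$, the only nontrivial partial derivatives are $\partial_t f = N/(2t^2)$ and $\partial_t^2 f = -N/t^3$, so $\check\grad^2_{ab}f = \partial_a\partial_b f - \check\Ga^c_{ab}\partial_c f$ collapses to $-(N/(2t^2))\check\Ga^0_{ab}$, plus the $-N/t^3$ partial when $a=b=0$. Substituting the Christoffel symbols from the preceding proposition and applying the algebraic identity
\beq
\cg^{00}\cdot\frac{N}{2t^3} \,=\, 1 - \cg^{00}\!\left(\frac{R}{t}+\frac{n}{2t^2}\right),
\eeq
which is immediate from the definition of $\cg_{00}$, converts every stray $N$ into the combinations on the right-hand sides of the stated formulae. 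The extra $-\half\cg_{00}$ term in the $00$-entry arises from combining the $-3/(2t)$ part of $\check\Ga^0_{00}$ with the $-N/t^3$ partial through this same identity.

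The only genuine obstacle is organisational: the traces produce many $\cg^{00}$-weighted corrections that must be regrouped into the compact forms shown, and one has to recognise the combination $M_{ij}$ each time it appears. No conceptual difficulty arises, and once the six identities are in place they can be added block by block together with $\half\cg$ to verify the approximate soliton equation $E_N \simeq 0$ of Theorem \ref{CESthm}: all $\cg^{00}$-independent pieces cancel exactly, while every surviving term carries at least one factor of $\cg^{00} = O(1/N)$ multiplied by a polynomial in the curvature and its covariant derivatives, which remains locally bounded along with all its $\check\grad$-derivatives.
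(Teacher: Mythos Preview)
Your proposal is correct and follows exactly the route the paper indicates: trace the Riemann components from the preceding proposition using the block-diagonal inverse metric, compute the Hessian from the Christoffel symbols via $\check\grad^2_{ab}f = \partial_a\partial_b f - \check\Ga^c_{ab}\partial_c f$, and simplify using the contracted Bianchi identity and the scalar-curvature evolution $R_t = \lap R + 2|\Ric|^2$. The paper's own justification consists of precisely these ingredients stated in the paragraph preceding the proposition, without further elaboration; your write-up simply fills in the organisational details (the Kulkarni--Nomizu trace, the identification of $M_{ij}$, and the key identity $\cg^{00}\cdot N/(2t^3) = 1 - \cg^{00}(R/t + n/(2t^2))$) that the paper leaves implicit. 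One small caution: in your expression $\check R_{i0} = \cg^{jk}\check R_{ji0k}$ the index placement gives the wrong sign for the leading term; the correct contraction is $\cg^{jk}\check R_{jik0}$, which then yields $+\half\nabla_i R$ after Bianchi.
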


By combining the formulae of Proposition \ref{ric_hess_prop}
and the definition of $\check g$, we deduce:
\begin{align*}
\check R_{ij} + \check \nabla_{ij}^2 \(- \frac{N}{2 t}\) 
+\half \cg_{ij}
& =  \frac{\cg^{00}}{t} \(\lap R_{ij} + 2 R_{ikjl} R^{kl}   + 
\frac{3R_{ij}}{2t} - \frac1{2} \nabla_i \nabla_j R + \frac{g_{ij}}{4 t^2}\)
 \\ & \quad + \frac{(\cg^{00})^2}{2 t^2}\left[\frac1{2} \nabla_i R \nabla_j R - \(\frac{2 R}{t} + \partial_t R + \frac{n}{2 t^2}\) \(R_{ij} + \frac{g_{ij}}{2t}\)\right]\\
\check R_{i0} + \check \nabla_{i0}^2 \(- \frac{N}{2 t}\) 
+\half \cg_{i0}
&= \frac{\check g^{00}}{2t} \(R_{i}^j + \frac{\delta_{i}^j}{2t}\) \nabla_j R\\
\check R_{00} + \check \nabla_{00}^2 \(- \frac{N}{2 t}\) 
+ \frac{1}{2} \check g_{00}
&= 
\frac{\cg^{00}}{4 t} |\nabla R|^2
\end{align*}
and from these formulae it is easy to deduce Theorem 
\ref{CESthm}.

\section{Appendix - Wilking's cones}
\label{wilking_appendix}

Let V be a vector space of dimension $m$. 
 Given an algebraic curvature form  
$\Rop\in Sym_B^2(\Wedge^2V^*)$, and an inner product
$g$ on $V$, we wish to consider $Q(\Rop,g)$ defined
as in \eqref{Qdef}  which is a vector in
$Sym_B^2(\Wedge^2V^*)$.

In this section, the goal is to find closed convex cones within the
space $Sym^2_B(\Wedge^2 V^*)$ (whose definition generally
can use the inner product $g$) which are invariant under the ODE
\beq
\label{RODE}\frac{d\Rop}{d t}=Q(\Rop,g)
\eeq
(for fixed $g$) and invariant under the action of $O(V)$ 
(with respect to $g$).
Thus we have a slightly more general setting than
in Definition \ref{Kdef}, and a combination of the
so-called Uhlenbeck trick and Hamilton's ODE-PDE theorem (cf. \cite{ham4mfd})
says that these cones are invariant under Ricci flow
on closed manifolds.

We now describe a special case of Wilking's unpublished method 
\cite{wilking} for 
finding a large number of such cones.

First, consider $V^\C$, the complexification of $V$, and extend each curvature
operator/form  complex linearly to 
act on $\Wedge^2 (V^\C)$
as in \cite{micallef_moore}. 
(We will make such extensions implicitly in what follows.)

Using the inner product $g$, we can pick an orthonormal basis
$\{e_i\}$ for $V$, and consider the corresponding 
action of $SO(m,\C)$ on $V^\C$.
Explicitly, an element $\sum v_i e_i\in V^\C$ (with $v_i\in\C$)
would be 
mapped by $A\in SO(m,\C)$ to the element $\sum_{i,j}A_{ij}v_j e_i$.
This action then extends naturally to $\Wedge^2 (V^\C)$.

\begin{thm} (Special case of Wilking \cite{wilking}.)
\label{wilkingthm}
Suppose $S\subset \Wedge^2 (V^\C)$ is a subset 
(not necessarily a linear subspace)
which is invariant under the action of $SO(m,\C)$.
Then the convex cone
$$\cc(S):=\{\Rop\in Sym_B^2(\Wedge^2 V^*)\ |\ 
\Rop(\om,\bar \om)\geq 0 \text{ for all }
\om\in S\}$$
is invariant under the ODE \eqref{RODE}. Equivalently,
if $\Rop\in\partial \cc(S)$ and we pick a nonzero $\om\in S$ such
that $\Rop(\om,\bar \om)=0$, then $Q(\Rop,g)(\om,\bar\om)\geq 0$.
\end{thm}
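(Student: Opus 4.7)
The plan is to establish the equivalent boundary condition stated in the theorem: whenever $\Rop \in \partial \cc(S)$ and $\om \in S$ is nonzero with $\Rop(\om, \bar\om) = 0$, one has $Q(\Rop, g)(\om, \bar\om) \geq 0$. The passage from this inequality to ODE-invariance of $\cc(S)$ is standard, since $\cc(S)$ is a closed convex cone (an intersection of closed half-spaces indexed by $\om \in S$), and Hamilton's tangent-cone characterisation (\cite[Lemma 4.1]{ham4mfd}) then gives equivalence between invariance of $\cc(S)$ under \eqref{RODE} and $Q$ pointing into the tangent cone at every boundary point of $\cc(S)$.

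For the boundary inequality itself, I would combine two ingredients. The first is a purely algebraic identity that rewrites $Q(\Rop, g)(\om, \bar\om)$ in a form adapted to the $SO(m,\C)$-action on $\Wedge^2 V^\C$. Using the canonical identification $\Wedge^2 V \cong \mathfrak{so}(V, g)$, one views $\om$ and any orthonormal basis $\{X_\al\}$ of $\mathfrak{so}(m,\C)$ as skew matrices; the infinitesimal $SO(m,\C)$-action on $\om$ is then given by the matrix commutator $\om \mapsto [X_\al, \om]$. Wilking's identity expresses $Q(\Rop, g)(\om, \bar\om)$ as a sum of terms $\Rop([X_\al, \om], \overline{[X_\al, \om]})$ together with correction terms involving $[X_\al, [X_\al, \om]]$ and $\Rop(\om, \bar\om)$ arising from Casimir-type contractions. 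I expect the verification of this identity to be the main technical obstacle: one must expand the four-index contraction in \eqref{Qdef}, apply the first Bianchi identity, and reorganise the resulting index sums to match the bracket structure of $\mathfrak{so}(m,\C)$.

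The second ingredient exploits the $SO(m,\C)$-invariance of $S$. For each $X_\al$, the curve $\gamma_\al(s) := e^{sX_\al}\, \om\, e^{-sX_\al}$ lies entirely in $S$, so the real-valued function $f_\al(s) := \Rop(\gamma_\al(s), \overline{\gamma_\al(s)})$ is non-negative and vanishes at $s = 0$, forcing $f_\al''(0) \geq 0$. Since $\dot\gamma_\al(0) = [X_\al, \om]$ and $\ddot\gamma_\al(0) = [X_\al, [X_\al, \om]]$, this yields
\[
\Rop\bigl([X_\al, \om],\, \overline{[X_\al, \om]}\bigr) + \mathrm{Re}\,\Rop\bigl([X_\al, [X_\al, \om]],\, \bar\om\bigr) \geq 0
\]
for each $\al$. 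Summing over $\al$ and substituting into Wilking's identity, the cross terms involving $[X_\al, [X_\al, \om]]$ match the correction in the identity, and since $\Rop(\om, \bar\om) = 0$ the remaining scalar terms also drop out, leaving $Q(\Rop, g)(\om, \bar\om) \geq 0$ as desired.
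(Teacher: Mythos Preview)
Your second-variation ingredient is the right instinct, but the proposed ``Wilking identity'' cannot hold as you describe it: $Q(\Rop,g)(\om,\bar\om)$ is \emph{quadratic} in $\Rop$ (see \eqref{Qdef}), while your main term $\sum_\al \Rop([X_\al,\om],\overline{[X_\al,\om]})$ and all the proposed corrections --- those involving $\Rop([X_\al,[X_\al,\om]],\bar\om)$ and $\Rop(\om,\bar\om)$ --- are \emph{linear} in $\Rop$. No identity of this shape is possible, so the ``main technical obstacle'' you flag is not merely technical but an actual obstruction. Summing your second-variation inequality over a basis does yield something (the double-commutator terms assemble into a Casimir and vanish against $\Rop(\om,\bar\om)=0$), but the resulting inequality $\sum_\al \Rop([X_\al,\om],\overline{[X_\al,\om]})\geq 0$ is still linear in $\Rop$ and does not bound $Q$.

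The paper, following Wilking, handles the quadratic nature differently. One splits $Q(\Rop,g)=\Rop^2+\Rop^\#$ with $\Rop^\#(X,Y)=-\tr(ad_X\circ\Rop\circ ad_Y\circ\Rop)$; since $\Rop^2\geq 0$ automatically, only $\Rop^\#(\om,\bar\om)\geq 0$ needs checking. The second variation is taken for an \emph{arbitrary} $x\in\frak{so}(m,\C)$, then repeated with $ix$ in place of $x$ and the two are added: this kills the double-commutator terms outright and gives $\Rop(ad_\om x,ad_{\bar\om}\bar x)\geq 0$ for every $x$, i.e.\ $\Rop$ is nonnegative on the range of $ad_\om$. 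The quadratic estimate then comes from diagonalisation: by $ad$-skewness, $-ad_\om\circ\Rop\circ ad_{\bar\om}$ is a nonnegative Hermitian operator, and tracing against its unitary eigenbasis $\{\om_i\}$ (whose eigenvectors with $\la_i>0$ lie in the range of $ad_\om$, where $\Rop\geq 0$) yields $\Rop^\#(\om,\bar\om)=\sum_i\la_i\Rop(\om_i,\bar\om_i)\geq 0$. This diagonalisation step --- manufacturing a quadratic inequality from the linear second-variation output --- is precisely the idea your outline is missing.
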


By taking different choices of $S$, Wilking recovered all the 
famous invariant curvature cones such as weakly positive 
isotropic curvature (WPIC) and its variants. 

Note that the invariance of $S$ under the action of
$SO(m,\C)$ will depend on the inner product $g$, but 
not on the specific orthonormal basis we chose.
For the purposes of this paper, we are interested to find
cones in $Sym_B^2(\Wedge^2 V^*)$ which do not depend on $g$,
and are therefore interested to find sets $S$ which
are invariant under the natural action of $GL(m,\R)$
in addition to $SO(m,\C)$.

Given an element $\om\in\Wedge^2 (V^\C)$, we define 
$rank(\om)\in\{0,1,2,\ldots,[\frac{m}{2}]\}$ to be the
least number of simple elements $u\wedge v$ we need sum to 
obtain $\om$. We then define
$$S_k=\{\om\in \Wedge^2 (V^\C)\ |\ rank(\om)\leq k\},$$
and notice that it is both $GL(m,\R)$ and $SO(m,\C)$
invariant.
By Theorem \ref{wilkingthm}, the 
curvature cones $\cc(S_k)$, which are
invariant under the action of $GL(m,\R)$,
are also invariant under the ODE \eqref{RODE}
for one, and thus all $g$ (even degenerate \emph{weakly}
positive definite $g\in Sym^2 V$, by approximation).

As discussed in Section \ref{resultssect}, 
one can check that $\cc(S_1)$ is equal to a cone of curvature 
forms introduced by Brendle and Schoen \cite{BS}. 
Meanwhile, since $\Wedge^2 V\subset S_{[\frac{m}{2}]}$, 
the cone $\cc(S_{[\frac{m}{2}]})$ consists of all  weakly positive 
definite curvature forms, i.e. the cone of curvatures
of manifolds of weakly positive curvature operator.

\subsection{Proof of Wilking's result}

For completeness, we give Wilking's proof of Theorem 
\ref{wilkingthm}. (Any inefficiencies or inaccuracies are due to us.)
Using the basis $\{e_i\}$ for $V$, 
we have a (linear) isomorphism between 
$\Wedge^2 V$ and $\frak{so}(m,\R)$ -- and hence between
$\Wedge^2 (V^\C)$ and $\frak{so}(m,\C)$ -- determined by 
$$e_i\wedge e_j \leftrightarrow [e_i\wedge e_j]_{kl}:=
\de_{ik}\de_{jl}-\de_{il}\de_{jk}.$$
We will thus see curvature operators as bilinear
forms, or operators, on $\frak{so}(m,\R)$ or $\frak{so}(m,\C)$ 
without change of notation.
The action of $A\in SO(m,\C)$ on $\Wedge^2 (V^\C)$ then
corresponds to the adjoint representation of $SO(m,\C)$,
$Ad_{A}:\frak{so}(m,\C) \to \frak{so}(m,\C)$  
which can be written
in terms of matrix multiplication as $Ad_A(v)=AvA^{-1}$,
so we may view $S$ as a subset of $\frak{so}(m,\C)$ invariant
under $Ad$.

We will also need the adjoint representation 
$ad_X: \frak{so}(m,\R) \to \frak{so}(m,\R)$ of $\frak{so}(m,\R)$ (and the
same for $\frak{so}(m,\C)$):
\beq
\label{diff}
ad_X Y=\frac{d}{dt}Ad_{\exp(tX)}Y\bigg|_{t=0}=[X,Y],
\eeq
and the formula
\beq
\label{diff2}
\frac{d^2}{dt^2}Ad_{\exp(tX)}Y\bigg|_{t=0}=ad_X ad_X Y.
\eeq

The natural inner product on 
$\frak{so}(m,\R)$ is invariant under the adjoint representation
of $SO(m,\R)$: 
for every $A\in SO(m,\R)$, 
\beq
\label{inv}
\langle X,Y \rangle = \langle Ad_A X,Ad_A Y \rangle,
\eeq
for all $X,Y\in \frak{so}(m,\R)$.
By setting $A=\exp(tZ)$
in \eqref{inv}, for $Z\in \frak{so}(m,\R)$, 
and differentiating with respect to $t$
at $t=0$ using \eqref{diff}, we know that
\beq
\label{adid}
0=\langle ad_Z X,Y\rangle + \langle X,ad_Z Y\rangle.
\eeq
The formula then extends 
to $X,Y,Z\in \frak{so}(m,\C)$ with $ad$ the adjoint representation
of $\frak{so}(n,\C)$.

Recall (Hamilton \cite{ham4mfd}, B\"ohm-Wilking \cite{BW})
that one can write 
$$Q(\Rop,g)=\Rop^2+\Rop^\#,$$
where
$$\Rop^\#(X,Y):=-\tr\left(ad_X\circ \Rop\circ ad_Y\circ \Rop\right),$$
the trace taking place over any orthonormal basis for $\frak{so}(m,\R)$,
or indeed over any unitary basis for $\frak{so}(m,\C)$. 
Therefore, since $\Rop^2$ is weakly positive definite,
it suffices to prove that 
$$\Rop^\#(v,\bar v)\geq 0$$
(extending bilinear forms complex linearly as usual)
whenever $v\in S\subset \frak{so}(m,\C)$ satisfies $\Rop(v,\bar v)=0$,
or equivalently 
$$-\tr\left(ad_{\bar v}\circ \Rop\circ ad_v\circ \Rop\right)\geq 0.$$

Because $\Rop(v,\bar v)=0$ and $\Rop(w,\bar w)\geq 0$ for all 
$w\in S$ (and $S$ is invariant under $Ad$) we have
$$0\leq \Rop(Ad_{\exp(tx)}v,Ad_{\exp(t\bar x)}\bar v)$$
for all $x\in \frak{so}(m,\C)$, and differentiating twice with respect
to $t$ using \eqref{diff} and \eqref{diff2} we find that
$$0\leq \Rop(ad_x ad_x v, \bar v)+ 
\Rop(v, ad_{\bar x} ad_{\bar x} \bar v)
+2 \Rop(ad_x v,ad_{\bar x}\bar v).$$
If we change $x$ to $ix$ to get a new inequality and then add the two
inequalities, we get $0\leq \Rop(ad_x v,ad_{\bar x}\bar v)$,
and hence
\beq
\label{mainstep}
\Rop(ad_v x,ad_{\bar v}\bar x)\geq 0,
\eeq
for all $x\in \frak{so}(m,\C)$.
Note we have shown that by virtue of being on the boundary
of $\cc(S)$, $\Rop$ is positive not only on $S$, but also on the 
entire image of $ad_v$.

Moreover, \eqref{mainstep} and \eqref{adid} imply
$$0\leq 
\langle \Rop( ad_v x),ad_{\bar v}\bar x\rangle
=-\langle ad_{\bar v}\circ \Rop\circ ad_v x,\bar x\rangle,$$
for all $x\in \frak{so}(m,\C)$ and we find that 
$-ad_{\bar v}\circ \Rop\circ ad_v \geq 0$, or equivalently
$$-ad_{v}\circ \Rop\circ ad_{\bar v} \geq 0,$$
i.e. a positive definite Hermitian operator on $\frak{so}(m,\C)$.

Thus we can diagonalise $-ad_{v}\circ \Rop\circ ad_{\bar v}$ on
$\frak{so}(m,\C)$, finding a unitary basis $\{\om_i\}$
(i.e. $\langle \om_i,\bar \om_i\rangle = \de_{ij}$)
such that $-ad_{v}\circ \Rop\circ ad_{\bar v} (\om_i)=  \la_i \om_i$,
with $\la_i\geq 0$.
For any $i$ such that $\la_i>0$, we therefore find that
$\om_i$ is in the image of $ad_v$, and thus
$\Rop(\om_i,\bar \om_i)\geq 0$ by \eqref{mainstep}. 
Consequently,
\beq
\begin{aligned}
-\tr\left(ad_{\bar v}\circ \Rop\circ ad_v\circ \Rop\right)
&=-\tr\left( \Rop\circ ad_v\circ \Rop \circ ad_{\bar v}\right)\\
&=\sum_i 
\langle \Rop\circ ad_v\circ \Rop \circ ad_{\bar v} (\om_i),\bar \om_i\rangle\\
&=\sum_i \la_i \langle \Rop(\om_i),\bar \om_i\rangle\\
&=\sum_i \la_i \Rop(\om_i,\bar \om_i)\\
&\geq 0.
\end{aligned}
\eeq

\brmk
Viewing $S$ as a subset of $\frak{so}(m,\C)$ makes it easier to
describe some further interesting examples (as observed
by Wilking). The cone $\cc(S)$ corresponding
to $S=\{X\ |\ rank(X)=2;\ X^2=0\}$, for example, is
the cone of weakly positive isotropic curvature (WPIC) 
operators.
\ermk

{\sc mathematics institute, university of warwick, coventry, CV4 7AL,
uk}\\
\url{http://www.warwick.ac.uk/staff/E.Cabezas-Rivas}\\
\url{http://www.warwick.ac.uk/~maseq}
\end{document}